\documentclass[leqno,11pt]{article}

\newif\ifpaper
\newif\ifarxiv

\papertrue
\arxivtrue

\usepackage[utf8]{inputenc}
\usepackage[T1]{fontenc}
\usepackage{microtype}

\ifpaper
  \usepackage[
    letterpaper
  ]{geometry}
\else
  \usepackage[
    a5paper,
    margin=2em,
    includefoot
  ]{geometry}
\fi

\usepackage{amsmath}
\usepackage{amsthm}
\usepackage{amssymb}
\usepackage{mathrsfs}

\usepackage[english]{babel}
\usepackage{csquotes}

\ifarxiv
  \usepackage{ae,aecompl}
  \usepackage{times}
\else
  \usepackage{newtxtext,newtxmath}
\fi

\ifarxiv
  \usepackage[colorlinks,pagebackref,hypertexnames=false]{hyperref}
  \usepackage[numbers]{natbib}
  
\else
  \usepackage[
    backend=biber,
    hyperref=true,
    backref=true,
    isbn=false,
    doi=false,
    natbib=true,
    eprint=false,
    useprefix=true,
    maxcitenames=99, 
    maxbibnames=99,
    safeinputenc
  ]{biblatex}
  \usepackage[colorlinks,hypertexnames=false]{hyperref}
  \addbibresource{pre/refs.bib}
\fi

\usepackage{esint} 

\usepackage[yyyymmdd]{datetime}

\newcommand{\printreferences}{
  \ifarxiv        
    \bibliographystyle{plainnat}
    \bibliography{pre/refs}
   \else
    \printbibliography[heading=bibintoc]
  \fi
}



\newcommand{\N}{\bN}

\newcommand{\R}{\bR}
\newcommand{\C}{\bC}


\newcommand{\su}{\mathfrak{su}}

\newcommand{\gl}{\mathfrak{gl}}

\newcommand{\U}{{\rm U}}
\newcommand{\PU}{{\bP\U}}

\newcommand{\Vect}{\mathrm{Vect}}


\DeclareMathOperator{\Hom}{Hom}

\DeclareMathOperator{\ad}{ad}

\DeclareMathOperator{\rk}{rk}

\DeclareMathOperator{\tr}{tr}

\newcommand{\End}{\mathop{\rm End}\nolimits}

\renewcommand{\det}{\operatorname{det}}

\newcommand{\delbar}{\bar{\del}}
\newcommand{\del}{\partial}

\newcommand{\id}{{\rm id}}
\newcommand{\loc}{{\rm loc}}

\newcommand{\sing}{{\rm sing}}
\newcommand{\vol}{{\rm vol}}


\renewcommand{\P}{\bP}
\renewcommand{\epsilon}{\varepsilon}

\def\({\left(}
\def\){\right)}
\def\<{\left\langle}
\def\>{\right\rangle}

\newcommand{\co}{\mskip0.5mu\colon\thinspace}

\newcommand{\iso}{\cong}

\newcommand{\andq}{\text{and}\quad}
\newcommand{\qandq}{\quad\text{and}\quad}

\newcommand{\qforeach}{\quad\text{for each }}


\usepackage{mathtools}
\DeclarePairedDelimiter{\Abs}{\|}{\|}
\DeclarePairedDelimiter{\abs}{\lvert}{\rvert}

\DeclarePairedDelimiter{\set}{\lbrace}{\rbrace}
\DeclarePairedDelimiter{\braket}{\langle}{\rangle}

\newcommand{\inner}[2]{\braket{#1, #2}}


\def\MSC#1{\href{http://www.ams.org/msc/msc2010.html?&s=#1}{#1}}

\makeatletter
\renewcommand\xleftrightarrow[2][]{%
  \ext@arrow 9999{\longleftrightarrowfill@}{#1}{#2}}
\newcommand\longleftrightarrowfill@{%
  \arrowfill@\leftarrow\relbar\rightarrow}
\makeatother


\newcommand{\rd}{{\rm d}}

\newcommand{\rF}{{\rm F}}

\newcommand{\rK}{{\rm K}}




\newcommand{\bg}{\mathbf{g}}

\newcommand{\bC}{\mathbf{C}}

\newcommand{\bN}{\mathbf{N}}

\newcommand{\bP}{\mathbf{P}}

\newcommand{\bR}{\mathbf{R}}



\newcommand{\sE}{\mathscr{E}}
\newcommand{\sF}{\mathscr{F}}
\newcommand{\sG}{\mathscr{G}}

\newcommand{\sL}{\mathscr{L}}

\newcommand{\sO}{\mathscr{O}}

\newcommand{\sT}{\mathscr{T}}


\newcommand{\fg}{{\mathfrak g}}

\newcommand{\fu}{{\mathfrak u}}

\usepackage{slashed}

\numberwithin{equation}{section}

\renewcommand{\eqref}[1]{\hyperref[#1]{\rm(\ref*{#1})}}

\usepackage{aliascnt}

\def\makeautorefname#1#2{\AtBeginDocument{\expandafter\def\csname#1autorefname\endcsname{#2}}}

\newcommand{\mynewtheorem}[2]{
  \newaliascnt{#1}{equation}          
  \newtheorem{#1}[#1]{#2}
  \aliascntresetthe{#1}
  \makeautorefname{#1}{#2}
}

\newcommand{\mynewproblem}[2]{
  \newaliascnt{#1}{myProblem}          
  \newtheorem{#1}[#1]{#2}
  \aliascntresetthe{#1}
  \makeautorefname{#1}{#2}
}

\mynewtheorem{theorem}{Theorem}
\newtheorem*{theorem*}{Theorem}
\mynewtheorem{prop}{Proposition}
\newtheorem*{prop*}{Proposition}

\mynewtheorem{cor}{Corollary}
\mynewtheorem{construction}{Construction}
\mynewtheorem{lemma}{Lemma}
\mynewtheorem{conjecture}{Conjecture}
\newtheorem*{conjecture*}{Conjecture}
\mynewtheorem{hyp}{Hypothesis}
\newtheorem{step}{Step}

\numberwithin{substep}{step}
\makeautorefname{step}{Step}
\makeautorefname{substep}{Step}

\numberwithin{subcase}{case}
\makeautorefname{case}{Case}
\makeautorefname{subcase}{case}

\theoremstyle{remark}
\mynewtheorem{remark}{Remark}
\newtheorem*{remark*}{Remark}

\theoremstyle{definition}
\mynewtheorem{definition}{Definition}
\mynewtheorem{example}{Example}
\mynewtheorem{exercise}{Exercise}
\mynewproblem{problem}{Problem}
\mynewtheorem{solution}{Solution}
\mynewtheorem{convention}{Convention}
\newtheorem*{convention*}{Convention}
\newtheorem*{conventions*}{Conventions}
\mynewtheorem{question}{Question}
\newtheorem*{question*}{Question}

\makeautorefname{table}{Table}        
\makeautorefname{chapter}{Chapter}
\makeautorefname{section}{Section}
\makeautorefname{subsection}{Section}
\makeautorefname{subsubsection}{Section}
\makeautorefname{footnote}{Footnote}

\ifarxiv
\else
  \usepackage{tikz-cd}
  \usetikzlibrary{arrows} 
  \tikzset{
    commutative diagrams/.cd, 
    arrow style=tikz, 
    diagrams={>=stealth}
  }
\fi

\author{
  Adam Jacob \\
  UC Davis
  \and
  Henrique Sá Earp \\
  Unicamp
  \and
  Thomas Walpuski \\
  Massachusetts Institute of Technology
}
\title{Tangent cones of Hermitian Yang--Mills connections with isolated singularities}
\date{2017-01-10}

\begin{document}

\maketitle

\begin{abstract}
  We give a simple direct proof of uniqueness of tangent cones for singular projectively Hermitian Yang--Mills connections on reflexive sheaves at isolated singularities modelled on a sum of $\mu$--stable holomorphic bundles over $\P^{n-1}$.
\end{abstract}

{\small
  \noindent
  \textbf{Keywords:}
    \textit{holomorphic vector bundles, reflexive sheaves, Hermitian Yang--Mills metrics, tangent cones} \\
  \medskip
  \noindent
  \textbf{MSC2010:}
    \MSC{53C07}; \MSC{14J60}, \MSC{32Q15}
}

\section{Introduction}
\label{Sec_Introduction}

A projectively Hermitian Yang--Mills ($\P$HYM) connection $A$ over a Kähler manifold $X$ is a unitary connection $A$ on a Hermitian vector bundle $(E,H)$ over $X$ satisfying
\begin{equation}
  \label{Eq_PHYMConnection}
  \rF_A^{0,2} = 0 \qandq
  i\Lambda \rF_A - \frac{\tr(i\Lambda \rF_A)}{\rk E}\cdot \id_E = 0.
\end{equation}
Since $\rF_A^{0,2} = 0$, $\sE := (E,\delbar_A)$ is a holomorphic vector bundle and $A$ is the Chern connection of $H$.
A Hermitian metric $H$ on a holomorphic vector bundle is called $\P$HYM if its Chern connection $A_H$ is $\P$HYM.
The celebrated Donaldson--Uhlenbeck--Yau Theorem \cite{Donaldson1985,Donaldson1986,Uhlenbeck1986} asserts that a holomorphic vector bundle $\sE$ on a compact Kähler manifold admits a $\P$HYM metric if and only if it is $\mu$--polystable;
moreover, any two $\P$HYM metrics are related by an automorphism of $\sE$ and by multiplication with a conformal factor.
If $H$ is a $\P$HYM metric, then the connection $A^\circ$ on $\PU(E,H)$ induced by $A_H$ is Hermitian Yang--Mills (HYM), that is, it satisfies $\rF_{A^\circ}^{0,2} = 0$ and $i\Lambda \rF_{A^\circ} = 0$;
it depends only on the conformal class of $H$.
Conversely, any HYM connection $A^\circ$ on $\PU(E,H)$ can be lifted to a $\P$HYM connection $A$; any two choices of lifts lead to isomorphic holomorphic vector bundles $\sE$ and conformal metrics $H$.

An admissible $\P$HYM connection is a $\P$HYM connection $A$ on a Hermitian vector bundle $(E,H)$ over $X \setminus \sing(A)$ with $\sing(A)$ a closed subset with locally finite $(2n-4)$--dimensional Hausdorff measure and $\rF_A \in L^2_\loc(X)$.
\citet{Bando1991} proved that if $A$ is an admissible $\P$HYM connection, then $(E,\delbar_A)$ extends to $X$ as a reflexive sheaf $\sE$ with $\sing(\sE) \subset \sing(A)$.
\citet{Bando1994} proved that a reflexive sheaf on a compact Kähler manifold admits an admissible $\P$HYM metric if and only if it is $\mu$--polystable.

The technique used by Bando and Siu does not yield any information on the behaviour of the admissible $\P$HYM connection $A_H$ near the singularities of the reflexive sheaf $\sE$---not even at isolated singularities.
The simplest example of a reflexive sheaf on $\C^n$ with an isolated singularity at $0$ is $i_*\sigma^*\sF$ with $\sF$ a holomorphic vector bundle over $\P^{n-1}$;
cf.~\citet[Example 1.9.1]{Hartshorne1980}.
\ifarxiv
  Here 
  \begin{equation*}
    i\co \C^n\setminus\set{0} \to \C^n,
    \pi\co \C^n\setminus\set{0} \to S^{2n-1},
    \rho\co S^{2n-1} \to \P^{n-1} \qandq 
    \sigma\co \C^n\setminus\set{0} \to \P^{n-1}
  \end{equation*}
  are the obvious maps.
\else
  Here we use the obvious maps summarised in the following diagram:
  \begin{equation*}
    \begin{tikzcd}[column sep=scriptsize]
      \C^n &
      \C^n\setminus\set{0} \ar[l,swap,hook',"i"] \ar[r,twoheadrightarrow,"\pi"] \ar[rr,twoheadrightarrow,bend right,"\sigma"] &
      S^{2n-1} \ar[r,twoheadrightarrow,"\rho"] &
      \P^{n-1}.
    \end{tikzcd}
  \end{equation*}
\fi
The main result of this article gives a description of $\P$HYM connections near singularities modelled on $i_*\sigma^*\sF$ with $\sF$ a sum of $\mu$--stable holomorphic vector bundles.

\begin{theorem}
  \label{Thm_HYMConnectionTangentCones}
  Let $\omega = \frac{1}{2i} \delbar\del\abs{z}^2 + O(\abs{z}^2)$ be a Kähler form on $\bar B_R(0) \subset \C^n$.
  Let $A$ be an admissible $\P$HYM connection on a Hermitian vector bundle $(E,H)$ over $B_R(0)\setminus\set{0}$ with $\sing(A) = \set{0}$ and $(E,\delbar_A) \iso \sigma^*\sF$ for some holomorphic vector bundle $\sF$ over $\P^{n-1}$.
  Denote by $F$ the complex vector bundle underlying $\sF$.
  If $\sF$ is sum of $\mu$--stable holomorphic vector bundles, then there exists a Hermitian metric $K$ on $F$, a connection $A_*$ on $\sigma^*(F,K)$ which is the pullback of a connection on $\rho^*(F,K)$, and an isomorphism $(E,H) \iso \sigma^*(F,K)$ such that with respect to this isomorphism we have
  \begin{equation*}
    \abs{z}^{k+1}\abs{\nabla_{A_*}^k(A^\circ - A_*^\circ)}
    \leq
      C_k(-\log\abs{z})^{-1/2}
    \qforeach k \geq 0; 
  \end{equation*}
  moreover, if $\sF$ is simple, then
  \begin{equation*}
    \abs{z}^{k+1}\abs{\nabla_{A_*}^k(A^\circ - A_*^\circ)}
    \leq
      D_k\abs{z}^{\alpha}
    \qforeach k \geq 0.
  \end{equation*}  
  The constants $C_k,D_k,\alpha > 0$ depend on $\omega$, $\sF$, $A|_{B_R(0)\setminus B_{R/2}(0)}$, and $\Abs{\rF_A}_{L^2(B_R(0))}$.
\end{theorem}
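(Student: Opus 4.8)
The plan is to fix the conical holomorphic structure $\sigma^*\sF$ and reduce everything to the analysis of a single Hermitian endomorphism. Since $\sF$ is a sum of $\mu$--stable bundles of a common slope it is $\mu$--polystable, so by the Donaldson--Uhlenbeck--Yau theorem it carries a $\P$HYM metric $K$ over $\P^{n-1}$. I take $A_*$ to be the pullback to $\sigma^*(F,K)$ of the Chern connection of $K$ on $\rho^*(F,K)$; this is a genuine radially constant cone connection whose projective part $A_*^\circ$ is HYM. Using the isomorphism $(E,\delbar_A)\iso\sigma^*\sF$, I write $H = \sigma^* K\cdot e^{s}$ with $s$ a section of $\sEnd(\sigma^* F)$ that is self-adjoint with respect to $K$, so that controlling $A^\circ - A_*^\circ$ reduces to controlling $s$ and its derivatives (note that constants in $s$ contribute nothing to $A^\circ-A_*^\circ$, which to leading order is $\nabla_{A_*}s$). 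The traceless $\P$HYM equation then becomes a second-order quasilinear elliptic equation $\Delta_{A_*}s + \cQ(s,\nabla s) = \cR$, whose linear part is the Laplacian coupled to $A_*$, whose nonlinearity $\cQ$ is at least quadratic in $\nabla s$, and whose inhomogeneity $\cR = O(\abs{z}^2)$ records the deviation of $\omega$ from the Euclidean cone metric.

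Next I pass to cylindrical coordinates $t=-\log\abs{z}$ on $\R_{\geq 0}\times S^{2n-1}$, conformally rescaling so that the flat cone metric becomes the product metric, $A_*$ becomes $t$--independent, and the geometric error $\cR = O(\abs{z}^2) = O(e^{-2t})$ is exponentially small. The operator $\Delta_{A_*}$ then takes the form $\partial_t^2 - (2n-2)\partial_t + \Delta_{L}$, where $\Delta_L$ is the Laplacian on $\sEnd(F)$ over the link coupled to the pullback of the Chern connection of $K$, and the asymptotics are governed by the spectrum of $\Delta_L$. This is exactly where $\mu$--stability enters: $\ker\Delta_L$ consists of the covariantly constant endomorphisms, which by a Bochner argument on the compact base equals $H^0(\P^{n-1},\sEnd\sF)$. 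When $\sF$ is simple this kernel is $\C\cdot\id$, a single mode corresponding to an overall scaling that is invisible to $A^\circ$; there is then a positive spectral gap above $0$, and an indicial-root analysis produces a positive exponent $\alpha$ together with the decay $\abs{\nabla_{A_*}s}\lesssim\abs{z}^{\alpha}$, which is the polynomial estimate. When $\sF$ is merely polystable the kernel is higher dimensional (the block scalings $\bigoplus_i\C\cdot\id_{\sF_i}$), the limiting model is non-isolated, and no linear spectral gap is available.

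To treat the general case I set up an energy/almost-monotonicity argument on dyadic annuli $B_{2\abs{z}}\setminus B_{\abs{z}/2}$, using $\Abs{\rF_A}_{L^2}<\infty$ (via Bando--Siu) to force the slice energies to tend to zero, and then a Łojasiewicz--Simon inequality for the (analytic) $\P$HYM functional on the link, or equivalently an energy differential inequality of the form $\tfrac{d}{dt}\cE\leq -c\,\cE^{2}$, to upgrade this to a definite rate. The degeneracy caused by the nontrivial kernel produces precisely the exponent responsible for the $\cE^{1/2}\sim t^{-1/2}=(-\log\abs{z})^{-1/2}$ bound on $\nabla_{A_*}(A^\circ-A_*^\circ)$; in the simple case the trivial kernel makes the inequality hold with the optimal exponent, recovering the polynomial rate consistently with the indicial analysis. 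Finally I bootstrap from this $C^0$/$L^2$ control of $s$ to bounds on every $\nabla_{A_*}^k(A^\circ - A_*^\circ)$ by applying interior Schauder estimates to the $\P$HYM equation on each annulus and rescaling, the scaling producing the weight $\abs{z}^{k+1}$.

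The main obstacle is the polystable, non-simple case. Because the space of $\P$HYM cone models is positive dimensional, uniqueness of the tangent cone is not automatic, and the zero modes of $\Delta_L$ must be controlled by a genuinely nonlinear argument rather than by a spectral gap: one must show that the drift of the block scalings is slaved to the decaying transverse modes through $\cQ$ in such a way that $\abs{z}\abs{\nabla^k_{A_*}(A^\circ-A^\circ_*)}$ still tends to zero, and extract the sharp $(-\log\abs{z})^{-1/2}$ rate from the degenerate inequality. Ensuring simultaneously that a single radially constant comparison connection $A_*$, pulled back from $\rho^*(F,K)$, can be chosen to realize the limit is the crux of the uniqueness statement.
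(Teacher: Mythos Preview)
Your setup has a concrete error: the hypothesis is only that $\sF$ is a sum of $\mu$--stable bundles, not that it is $\mu$--polystable, so the slopes $\mu_j$ of the summands need not agree (the paper's \autoref{Ex_MuUnstable} has $\sF=\sT_{\P^2}\oplus\sO_{\P^2}(1)$ with slopes $3/2$ and $1$). When the slopes differ $\sF$ carries no $\P$HYM metric, and $\sigma^*K$ for $K=\bigoplus K_j$ is not $\P$HYM either, since $i\Lambda_{\omega_0}\rF_{\sigma^*K}=\bigoplus_j (2n-2)\mu_j r^{-2}\id_{F_j}$ has nonzero trace-free part; so your $A_*^\circ$ is not HYM. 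The correct reference is the \emph{twisted} metric $H_\diamond=\bigoplus_j r^{2\mu_j}\sigma^*K_j$, which is genuinely HYM for $\omega_0$; its Chern connection, transported by the isometry $\bigoplus_j r^{\mu_j}$, becomes $A_*=\bigoplus_j\sigma^*B_j+i\mu_j\,\id\cdot\pi^*\theta$, the pullback of a connection on $S^{2n-1}$ (not on $\P^{n-1}$). This is repairable and your description of the link kernel survives the fix. A more serious omission is that you never establish a $C^0$ bound on $s$: without $\abs{s}\leq c$ you are not in the regime where any of your asymptotic machinery---indicial analysis or Łojasiewicz--Simon---applies, and $\Abs{\rF_A}_{L^2}<\infty$ alone does not furnish this.

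Beyond these gaps your route is genuinely different from the paper's, and the step you flag as the main obstacle is exactly what the paper sidesteps. The paper first gets $\abs{s}\leq c$ from the Bando--Siu subsolution inequality $\Delta\log\tr e^s\lesssim\abs{\rK_{H_\diamond}}$ via slicing and Moser iteration. Then, rather than passing to cylindrical coordinates or invoking a Łojasiewicz--Simon inequality, it extracts from the $\P$HYM equation the pointwise bound $\abs{\nabla_{H_\diamond}s}^2\lesssim 1-\Delta\abs{s}^2$ and tests it against the Green's function on $B_r$: one integration by parts yields, for $g(r):=\int_{B_r}\abs{z}^{2-2n}\abs{\nabla_{H_\diamond}s}^2$, the elementary inequality $g(r)\lesssim(g(2r)-g(r))^{1/2}+r^2$, and a two-line iteration then forces $g(r)\lesssim(-\log r)^{-1}$. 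In the simple case the Poincaré inequality on the link---precisely your spectral gap---upgrades this to $g(r)\leq\gamma\, g(2r)+Ar^2$ with $\gamma<1$, hence $g(r)\lesssim r^{2\alpha}$. The bootstrap to higher derivatives by rescaling and interior estimates is as you describe. So the paper replaces your proposed nonlinear tracking of kernel modes by a single scalar differential inequality and a direct iteration, handling the degenerate polystable case without ever isolating the drift along the zero modes; this is the sense in which the proof is ``direct'' as advertised, in contrast to the Łojasiewicz--Simon approach of Yang that the paper explicitly sets aside.
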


\begin{remark}
  Using a gauge theoretic Łojasiewicz--Simon gradient inequality, \citet[Theorem 1]{Yang2002} proved that the tangent cone to a stationary Yang--Mills connection---in particular, a $\PU(r)$ HYM connection---with an isolated singularity at $x$ is unique provided
  \begin{equation*}
    \abs{\rF_A} \lesssim d(x,\cdot)^{-2}.
  \end{equation*}
  In our situation, such a curvature bound can be obtained from \autoref{Thm_HYMConnectionTangentCones};
  our proof of this result, however, proceeds more directly---without making use of Yang's theorem.
\end{remark}

The hypothesis that $\sF$ be a sum of $\mu$--stable holomorphic vector bundles is optimal.
This is a consequence of the following observation, which will be proved in \autoref{Sec_RadiallyInvariantHYMConnections}.

\begin{prop}
  \label{Prop_RadiallyInvariantHYMConnections}
  Let $(F,K)$ be a Hermitian vector bundle over $\P^{n-1}$.
  If $B$ is a unitary connection on $\rho^*(F,K)$ such that $A_* := \pi^*B$ is HYM with respect to $\omega_0 := \frac{1}{2i}\delbar\del \abs{z}^2$,
  then there is a $k \in \N$ and, for each $j \in \set{1,\ldots,k}$, $\mu_j \in \R$, a Hermitian vector bundle $(F_j,K_j)$ on $\P^{n-1}$, and an irreducible unitary connection $B_j$ on $F_j$ satisfying
  \begin{equation*}
    \rF_{B_j}^{0,2} = 0 \qandq
    i\Lambda \rF_{B_j} = (2n-2)\pi \mu_j \cdot \id_{F_j}
  \end{equation*}
  such that
  \begin{equation*}
    F = \bigoplus_{j=1}^k F_j \qandq
    B = \bigoplus_{j=1}^k \rho^*B_j + i \mu_j \,\id_{\rho^*F_j}\cdot \theta.
  \end{equation*}
  Here $\theta$ denotes the standard contact structure%
  \footnote{%
    With respect to standard coordinates on $\C^n$, the standard contact structure $\theta$ on $S^{2n-1}$ is such that $\pi^*\theta = \sum_{j=1}^n (\bar z_j\rd z_j - z_j\rd \bar z_j)/2i\abs{z}^2$.
  }
  on $S^{2n-1}$.
  In particular,
  \begin{equation*}
    \sE = (\sigma^*F,\delbar_{A_*}) \iso \bigoplus_{j=1}^k \sigma^*\sF_j
  \end{equation*}
  with $\sF_j = (F_j,\delbar_{B_j})$ $\mu$--stable.
\end{prop}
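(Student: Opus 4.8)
The plan is to exploit the Sasakian/cone geometry of $\C^n\setminus\set{0}$ over $S^{2n-1}$ and $\P^{n-1}$, to reduce the HYM equations to data on $\P^{n-1}$, and then to invoke the rigidity of holomorphic endomorphisms of polystable bundles. First I would record the geometric setup. Writing $r=\abs{z}$, the footnote's formula gives $\abs{z}^2\,\pi^*\theta = \tfrac{1}{2i}\sum_j(\bar z_j\,\rd z_j - z_j\,\rd\bar z_j)$, whence $\rd(\abs{z}^2\pi^*\theta)=2\omega_0$, i.e. $\omega_0 = \tfrac12\rd(r^2\theta)$ on the cone. Correspondingly $T(\C^n\setminus\set{0})$ splits as $\R\,\partial_r\oplus\R\,R\oplus\cH$, where $R$ is the Reeb field of $\theta$ and $\cH=\ker\theta$; the complex structure $J$ interchanges the radial and Reeb directions and restricts on $\cH\iso\sigma^*T\P^{n-1}$ to the Fubini--Study complex structure. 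I would then write the unitary connection as $B = B^{\cH} + \theta\cdot\Phi$ with $\Phi := \iota_R B\in\Gamma(\mathfrak{u}(F))$ and $B^{\cH}$ the horizontal part.

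Next comes the central observation. Since $A_* = \pi^*B$ is HYM, its curvature $\rF_{A_*}=\pi^*\rF_B$ is of type $(1,1)$, and being a $\pi$-pullback it has no radial leg: $\iota_{\partial_r}\rF_{A_*}=0$. For a $(1,1)$-form the contraction satisfies $\iota_{JX}\rF_{A_*} = -(\iota_X\rF_{A_*})\circ J$, so with $X=r\partial_r$ and $JX=R$ we conclude $\iota_R\rF_{A_*}=0$. Equivalently, for the holomorphic Euler field $\xi=\sum_j z_j\,\partial_{z_j}$ (whose imaginary part is a multiple of $R$) we have $\iota_\xi\rF_{A_*}=0$. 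I would then define the endomorphism $\Psi := \iota_\xi A_*^{1,0}$ of $\sE=(\sigma^*F,\delbar_{A_*})$, whose skew-Hermitian part is $\Phi$. Because $\xi$ is holomorphic and $A_*$ is integrable, the standard identity $\delbar_{A_*}\Psi = -\iota_\xi\rF_{A_*}$ gives $\delbar_{A_*}\Psi=0$, so $\Psi$ is a holomorphic endomorphism of $\sE$.

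Then I would descend to $\P^{n-1}$. Integrability together with $\iota_R\rF_{A_*}=0$ identifies $\sE$ with the $\sigma$-pullback of a holomorphic bundle $\sF=(F,\delbar)$ on $\P^{n-1}$, and the HYM equation restricted to $\cH$ says exactly that $\sF$ carries a $\P$HYM metric, hence is $\mu$-polystable by the Donaldson--Uhlenbeck--Yau theorem. On a $\mu$-polystable bundle every holomorphic endomorphism is parallel and semisimple — this is the rigidity I would use, and it follows from the Weitzenböck identity for $\Psi$ together with the central value of $i\Lambda\rF$, integrated over the compact $\P^{n-1}$. Thus $\nabla_{A_*}\Psi=0$, its eigenbundle decomposition $F=\bigoplus_j F_j$ is $\nabla_{A_*}$-parallel, each $\sF_j$ is $\mu$-stable, and $\Psi = \bigoplus_j c_j\,\id_{F_j}$ for constants $c_j$; these constants are purely imaginary, $c_j = i\mu_j$ with $\mu_j\in\R$, since $\Phi=\Im\Psi$ is skew-Hermitian. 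On each summand $\Phi=i\mu_j\id_{F_j}$ forces $B = \bigoplus_j\rho^*B_j + i\mu_j\,\id_{\rho^*F_j}\cdot\theta$, where $B_j$ is the induced connection on $\sF_j$; each $B_j$ is irreducible because $\mu$-stable bundles are simple. Finally, inserting this back gives $\rF_{A_*}|_{F_j} = \pi^*(\rho^*\rF_{B_j}+i\mu_j\,\rd\theta\cdot\id)$, and evaluating $i\Lambda_{\omega_0}\rF_{A_*}=0$ against $\omega_0=\tfrac12\rd(r^2\theta)$ converts the Reeb weight $\mu_j$ into the stated Hermitian--Einstein constant $i\Lambda\rF_{B_j}=(2n-2)\pi\mu_j\cdot\id_{F_j}$, the factor $(2n-2)\pi$ coming from $\Lambda_{\omega_0}(\rd\theta)$ and the normalisation of the Fubini--Study form.

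I expect the main obstacle to be the third and fourth steps, namely making precise that the Euler contraction $\Psi$ really is a \emph{global holomorphic} endomorphism of a bundle pulled back from $\P^{n-1}$ and that the whole problem reduces to the compact base: one must check that $\sE\iso\sigma^*\sF$ holomorphically, so that the polystable rigidity applies, and carefully track the skew-Hermitian versus complex-linear parts of $\Psi$ so that the twist $i\mu_j\,\id\cdot\theta$ appears with the correct real weights. Once the reduction to a polystable bundle on $\P^{n-1}$ is in place, the decomposition and the identification of the constant $(2n-2)\pi$ are routine.
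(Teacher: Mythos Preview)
Your route differs from the paper's. The paper isolates a purely gauge-theoretic lemma valid on any free $S^1$--space: a unitary connection with $i(\xi)\rF_A = 0$ is preserved by the horizontal $\R$--lift of the Killing field, and the obstruction to an $S^1$--action is a parallel unitary gauge transformation $\bg_A$ (the time--$1$ map) whose eigendecomposition gives the splitting $E = \bigoplus E_j$ and the weights $\mu_j$. No complex structure enters until the final computation of $i\Lambda\rF_{B_j}$. Your $\Psi$ is the infinitesimal generator of this same $\bg_A$, so the two decompositions coincide, but you reach yours through holomorphic geometry rather than through the $S^1$--holonomy.

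There is, however, a genuine gap in your argument for $\nabla_{A_*}\Psi = 0$. You invoke rigidity of holomorphic endomorphisms of a polystable bundle on $\P^{n-1}$, but to know that the relevant $\sF$ carries a $\P$HYM metric you need the \emph{connection} (not just the holomorphic structure) to descend from $S^{2n-1}$ to $\P^{n-1}$, and that descent is exactly the decomposition $B = \bigoplus \rho^*B_j + i\mu_j\,\id\cdot\theta$ you are trying to prove. You correctly flag the reduction to the compact base as the main obstacle; the proposed resolution via Donaldson--Uhlenbeck--Yau is circular.

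The repair is shorter than the detour and comes from correcting your description of $\Psi$. In the pullback trivialisation $\Psi = \iota_\xi a = b(d\pi(\xi))$; since $d\pi$ annihilates $r\del_r$ and sends $J(r\del_r)$ to the Reeb field, this equals $-\tfrac{i}{2}\Phi$. Hence $\Psi$ is \emph{Hermitian} outright (its eigenvalues are the real numbers $\tfrac12\mu_j$, not $i\mu_j$), rather than ``with skew-Hermitian part $\Phi$''. For a Chern connection a Hermitian holomorphic endomorphism is automatically parallel: $\del_{A_*}\Psi = (\delbar_{A_*}\Psi^*)^* = (\delbar_{A_*}\Psi)^* = 0$. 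With $\nabla_{A_*}\Psi = 0$ established directly, the eigenbundles are parallel, on each one $\Phi = i\mu_j\,\id$ with $\mu_j\in\R$, the twisted connection $B|_{E_j} - i\mu_j\,\id\cdot\theta$ has trivial Reeb leg and trivial $S^1$--holonomy and so descends to $B_j$ on $\P^{n-1}$, and your final computation goes through.
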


To conclude the introduction we discuss two concrete examples in which \autoref{Thm_HYMConnectionTangentCones} can be applied.

\begin{example}[{\citet[Example 1.1.13]{Okonek2011}}]
  \label{Ex_MuStable}
  It follows from the Euler sequence that $H^0(\sT_{\P^3}(-1)) \iso \C^4$.
  Denote by $s_v \in H^0(\sT_{\P^3}(-1))$ the section corresponding to $v \in \C^4$.
  If $v \neq 0$, then the rank two sheaf $\sE = \sE_v$ defined by
  \begin{equation*}
    0
    \to
    \sO_{\P^3}
    \xrightarrow{s_v}
    \sT_{\P^3}(-1)
    \to
    \sE_v
    \to
    0
  \end{equation*}
  is reflexive and $\sing(\sE) = \set{[v]}$.

  $\sE$ is $\mu$--stable.
  To see this, because $\mu(\sE) = 1/2$, it suffices to show that
  \begin{equation*}
    \Hom(\sO_{\P^3}(k),\sE) = H^0(\sE(-k)) = 0 \qforeach k \geq 1.
  \end{equation*}
  However, by inspection of the Euler sequence, $H^0(\sE(-k)) \iso H^0(\sT_{\P^3}(-k-1)) = 0$.
  It follows that $\sE$ admits a $\P$HYM metric $H$ with $\rF_H \in L^2$ and a unique singular point at $[v] \in \P^3$.
  To see that \autoref{Thm_HYMConnectionTangentCones} applies, pick a standard affine neighbourhood $U \iso \C^3$ in which $[v]$ corresponds to $0$.
  In $U$, the Euler sequence becomes
  \begin{equation*}
    0 \to \sO_{\C^3} \xrightarrow{(1,z_1,z_2,z_3)} \sO_{\C^3}^{\oplus 4} \to \sT_{\P^3}(-1)|_{U} \to 0,
  \end{equation*}
  and $s_v = [(1,0,0,0)]$;
  hence,
  \begin{equation*}
    0 \to \sO_{\C^3} \xrightarrow{(z_1,z_2,z_3)} \sO_{\C^3}^{\oplus 3} \to \sE_v|_{U} \to 0.
  \end{equation*}
  On $\C^3\setminus\set{0}$, this is the pullback of the Euler sequence on $\P^2$;
  therefore, $\sE_v|_U \iso i_*\sigma^* \sT_{\P^2}$.
\end{example}

\begin{example}
  \label{Ex_MuUnstable}
  For $t \in \C$ define $f_t \co \sO_{\P^3}(-2)^{\oplus 2} \to \sO_{\P^3}(-1)^{\oplus 5}$ by
  \begin{equation*}
    f_t :=
    \begin{pmatrix}
      z_0 & 0 \\
      z_1 & z_0 \\
      z_2 & z_1 \\
      t\cdot z_3 & z_2 \\
      0 & z_3
    \end{pmatrix}
  \end{equation*}
  and denote by $\sE_t$ the cokernel of $f_t$, i.e.,
  \begin{equation}
    \label{Eq_EtSequence}
    0 \to \sO_{\P^3}(-2)^{\oplus 2} \xrightarrow{f_t} \sO_{\P^3}(-1)^{\oplus 5} \to \sE_t \to 0.
  \end{equation}
  If $t \neq 0$, then $\sE_t$ is locally free;
  $\sE_0$ is reflexive with $\sing(\sE_0) = \set{ [0:0:0:1] }$.
  The proof of this is analogous to that of the reflexivity of $\sE_v$ from \autoref{Ex_MuStable} given in \cite[Example 1.1.13]{Okonek2011}.

  For each $t$, $H^0(\sE_t) = H^0(\sE_t^*(-1)) = 0$;
  hence, $\sE_t$ is $\mu$--stable according to the criterion of \citet[Remark 1.2.6(b)]{Okonek2011}.
  The former vanishing is obvious since $H^0(\sO_{\P^3}(-1)) = H^1(\sO_{\P^3}(-2)) = 0$.
  The latter follows by dualising \eqref{Eq_EtSequence}, twisting by $\sO_{\P^3}(-1)$ and observing that the induced map $H^0(f_0^*)\co H^0(\sO_{\P^3})^{\oplus 5} \to H^0(\sO_{\P^3}(1))^{\oplus 2}$, which is given by
  \begin{equation*}
    \begin{pmatrix}
      z_0 & z_1 & z_2 & t\cdot z_3  & 0 \\
      0 & z_0 & z_1 & z_2 & z_3
    \end{pmatrix},
  \end{equation*}
  is injective.

  In a standard affine neighbourhood $U \iso \C^3$ of $[0:0:0:1]$, we have $\sE_0|_U \iso i_*\sigma^*(\sT_{\P^2}\oplus \sO_{\P^2}(1))$.
  To see this, note that the cokernel of the map $g \co \sO_{\P^2}^{\oplus 2} \to \sO_{\P^2}(1)^{\oplus 4} \oplus \sO_{\P^2}$ defined by
  \begin{equation*}
    g :=
    \begin{pmatrix}
      z_0 & 0 \\
      z_1 & z_0 \\
      z_2 & z_1 \\
      0   & z_2 \\
      0   & 1
    \end{pmatrix}
  \end{equation*}
  is $\sT_{\P^2}\oplus \sO_{\P^2}(1)$.
\end{example}


\section{Reduction to the metric setting}

In the situation of \autoref{Thm_HYMConnectionTangentCones}, the Hermitian metric $H$ on $\sE$ corresponds to a $\P$HYM metric on $\sigma^*\sF$ via the isomorphism $(E,\delbar_A) \iso \sigma^*\sF$.
By slight abuse of notation, we will denote this metric by $H$ as well.

Denote by $\sF_1,\ldots, \sF_k$ the $\mu$--stable summands of $\sF$.
Denote by $K_j$ the $\P$HYM metric on $\sF_j$ with
\begin{equation*}
  i\Lambda_{\omega_{FS}} \rF_{K_j}
   = \lambda_j \cdot \id_{F_j}
  := \frac{2\pi}{(n-2)!\vol(\P^{n-1})} \mu_j \cdot \id_{F_j}
   = (2n-2) \pi \mu_j \cdot \id_{F_j}
\end{equation*}
with $\omega_{FS}$ denoting the integral Fubini study form and for $\mu_j := \mu(\sF_j)$.
The Kähler form $\omega_0$ associated with the standard Kähler metric on $\C^n$ can be written as
\begin{equation}
  \label{Eq_Omega0Expansion}
  \omega_0
  = \frac{1}{2i} \delbar\del \abs{z}^2
  = \pi r^2 \sigma^*\omega_{FS} + r \rd r \wedge \pi^*\theta
\end{equation}
with $\theta$ as in \autoref{Prop_RadiallyInvariantHYMConnections}.
Therefore, we have
\begin{equation*}
  i\Lambda_{\omega_0} \rF_{\sigma^*K_j}
  = (2n-2) \mu_j r^{-2} \cdot \id_{\sigma^*F_j},
\end{equation*}
and $H_{\diamond,j} := r^{2\mu_j} \cdot \sigma^*K_j$ satisfies
\begin{align*}
  i\Lambda_{\omega_0} \rF_{H_{\diamond,j}}
  &=
    i\Lambda_{\omega_0} \rF_{\sigma^* K_j}
    + i\Lambda_{\omega_0} \delbar\del \log r^{2\mu_j} \cdot \id_{\sigma^*F_j} \\
  &=
    i\Lambda_{\omega_0} \rF_{\sigma^* K_j}
    + \frac12 \Delta \log r^{2\mu_j} \cdot \id_{\sigma^*F_j}
  = 0.
\end{align*}
Denote by $A_{\diamond,j}$ the Chern connection associated with $H_{\diamond,j}$ and by $B_j$ the Chern connection associated with $K_j$.
The isometry $r^{\mu_j} \co (\sigma^*F_j,H_{\diamond,j}) \to \sigma^*(F_j,K_j)$ transforms $A_{\diamond,j}$ into
\begin{equation*}
  A_{*,j} 
  := (r^{\mu_j})_* A_{\diamond,j}
   = \sigma^*B_j + i \mu_j \, \id_{\sigma^*F_j} \cdot \pi^*\theta.
\end{equation*}
In particular, $A_* := \bigoplus_{j=1}^k A_{*,j}$ is the pullback of a connection $B$ on $S^{2n-1}$;
moreover $A_*$ is unitary with respect to $H_* := \bigoplus_{j=1}^k \sigma^*K_j$.

\begin{prop}
  \label{Prop_HDiamondProperties}
  Assume the above situation.
  Set $H_\diamond := \bigoplus_{j=1}^k H_{\diamond,j}$ and fix $R > 0$.
  We have
  \begin{equation}
    \label{Eq_HDiamondCurvatureBounds}
    \Abs*{\abs{z}^{2+\ell}\nabla_{H_\diamond}^\ell \rF_{H_\diamond}}_{L^\infty(B_R(0))} < \infty
    \qforeach \ell \geq 0.
  \end{equation}
  Moreover, if $\sF$ is simple (that is $k = 1$), then
  \begin{equation}
    \label{Eq_HDiamondDirichletPoincare}
    \int_{\del B_r}\abs{s}^2
    \lesssim
    r^2 \int_{\del B_r} \abs{\nabla_{H_\diamond} s}^2
  \end{equation}
  for all $r \in (0,R]$ and $s \in C^\infty(\del B_r(0),i\su(\sigma^*F,H_\diamond))$.
\end{prop}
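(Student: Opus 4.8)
The plan is to transfer both statements to the model connection $A_*$ and then to exploit its invariance under the dilations $m_\lambda\co z\mapsto\lambda z$. First, the fibrewise multiplication $r^{\mu_j}\co(\sigma^* F_j,H_{\diamond,j})\to\sigma^*(F_j,K_j)$ is a bundle isometry over the identity which intertwines $A_{\diamond,j}$ with $A_{*,j}$; taking the direct sum carries $H_\diamond$ to $H_*=\bigoplus_j\sigma^* K_j$ and $\nabla_{H_\diamond}$ to $\nabla_{A_*}$. As it covers the identity and is a fibrewise isometry, it preserves the pointwise norms of the curvature and of all its covariant derivatives, so
\[
  \abs{\nabla_{H_\diamond}^\ell\rF_{H_\diamond}}=\abs{\nabla_{A_*}^\ell\rF_{A_*}}\qforeach\ell\geq0,
\]
and, the conformal factor $r^{2\mu_j}$ cancelling in the induced metric on $\End(\sigma^*F)$, the norm on $i\su$ is unchanged. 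Thus it suffices to prove both assertions for $A_*$ and $H_*$.

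For \eqref{Eq_HDiamondCurvatureBounds} I argue by homogeneity. Since $\pi\circ m_\lambda=\pi$ and $A_*=\pi^*B$, the connection is dilation-invariant, $m_\lambda^*A_*=A_*$, and hence so is $\rF_{A_*}$. Because $m_\lambda$ is a homothety of $(\C^n,\omega_0)$ it preserves the Levi-Civita connection, so each $\nabla_{A_*}^\ell\rF_{A_*}$ is a dilation-invariant $(\ell+2)$-tensor. Using $dm_\lambda=\lambda\cdot\id$ together with $m_\lambda^*\omega_0=\lambda^2\omega_0$, any dilation-invariant $(\ell+2)$-tensor $T$ obeys $\abs{T}_{\omega_0}(\lambda z)=\lambda^{-(\ell+2)}\abs{T}_{\omega_0}(z)$; taking $\lambda=\abs{z}$ gives
\[
  \abs{z}^{2+\ell}\bigl|\nabla_{A_*}^\ell\rF_{A_*}\bigr|_{\omega_0}(z)=\bigl|\nabla_{A_*}^\ell\rF_{A_*}\bigr|_{\omega_0}\!\bigl(z/\abs{z}\bigr).
\]
The right-hand side is bounded by $\sup_{S^{2n-1}}\bigl|\nabla_{A_*}^\ell\rF_{A_*}\bigr|$, which is finite because $\nabla_{A_*}^\ell\rF_{A_*}$ is smooth on $\C^n\setminus\set{0}$ and $S^{2n-1}$ is compact. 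This is \eqref{Eq_HDiamondCurvatureBounds}.

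For \eqref{Eq_HDiamondDirichletPoincare}, with $k=1$, the dilation $z\mapsto rz$ identifies $\del B_r$ with $S^{2n-1}$ and scales the induced metric by $r^2$; since the $\End$-norm is dilation-invariant and $A_*|_{\del B_r}$ is the pullback of $B$, the claim reduces, with a constant independent of $r$, to the Poincaré inequality
\[
  \int_{S^{2n-1}}\abs{\tilde s}^2\lesssim\int_{S^{2n-1}}\abs{\nabla_B\tilde s}^2,\qquad\tilde s\in C^\infty\bigl(S^{2n-1},i\su(\rho^*F,\rho^*K)\bigr).
\]
On the compact connected $S^{2n-1}$ this holds as soon as the nonnegative elliptic operator $\nabla_B^*\nabla_B$ has trivial kernel, i.e.\ as soon as $i\su$ carries no nonzero $\nabla_B$-parallel section. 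The central term $i\mu_1\,\id\cdot\theta$ of $B$ acts trivially on $\End(\rho^*F)$, so the induced connection there is pulled back from $\P^{n-1}$ via $\rho$; a parallel section is therefore constant along the connected circle fibres of $\rho$ and descends to a $B_1$-parallel traceless endomorphism of $F$ on $\P^{n-1}$. As $B_1$ is irreducible (equivalently $\sF$ is simple, so such a section is holomorphic, hence scalar), Schur's lemma forces it to vanish, giving the spectral gap and thus \eqref{Eq_HDiamondDirichletPoincare}.

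The main obstacle is the second part: beyond the clean scaling reduction to the unit sphere, it hinges on the nonexistence of parallel traceless endomorphisms, which is precisely where the simplicity hypothesis $k=1$ enters, through the irreducibility of $B_1$ and Schur's lemma. The first part, by contrast, is an essentially formal consequence of the scale invariance of $A_*=\pi^*B$ and the compactness of $S^{2n-1}$.
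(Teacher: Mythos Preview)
Your proof is correct and follows essentially the same approach as the paper's: transfer both statements to $A_*$ via the isometry $g=\bigoplus_j r^{\mu_j}$, deduce \eqref{Eq_HDiamondCurvatureBounds} from the fact that $A_*=\pi^*B$ is pulled back from the compact sphere, and obtain \eqref{Eq_HDiamondDirichletPoincare} from the Poincar\'e inequality on $S^{2n-1}$ by showing $\ker\nabla_B=0$ on $i\su$ via the observation that the central term drops out, so parallel sections descend to $\P^{n-1}$ and vanish by simplicity of $\sF_1$. Your write-up simply spells out the homogeneity and scaling arguments that the paper leaves implicit.
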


\begin{proof}
  Using the isometry $g := \bigoplus_{j=1}^k r^{\mu_j}$ both assertions can be translated to corresponding statements for $A_*$.
  The first assertion then follows since $A_*$ is the pullback of a connection $B$ on $S^{2n-1}$.
  If $k=1$, then $\nabla_B \co C^\infty(S^{2n-1},i\su(\rho^*F,K_1)) \to \Omega^1(S^{2n-1},i\su(\rho^*F,K_1))$ agrees with $\nabla_{\rho^*B_1}$ because $i \mu_1 \, \id_{\sigma^*F_1}$ is central.
  Therefore, any element of $\ker \nabla_B = \ker \nabla_{\rho^*B_1}$ must be invariant under the $S^1$--action and thus be the pullback of an element of $\ker \nabla_{B_1}$.
  The latter vanishes because $\sF_1$ is $\mu$--stable;
  hence, simple.
  This implies the second assertion.
\end{proof}

In the situation of \autoref{Thm_HYMConnectionTangentCones}, after a conformal change, which does not  affect $A^\circ$, we can assume that $\det H = \det H_\diamond$.
Setting
\begin{gather*}
  s := \log (H_\diamond^{-1}H) \in C^\infty(B_r(0)\setminus\set{0},i\su(\sigma^*F,H_\diamond)), \\
  \andq
  \Upsilon(s) := \frac{e^{\ad_s}-1}{\ad_s},
\end{gather*}
we have
\begin{gather*}
  e^{s/2}_*H = H_\diamond
  \qandq
  e^{s/2}_*A = A_\diamond + a \\
  \text{with}\quad
  a := \frac12\Upsilon(-s/2)\del_{A_\diamond}s - \frac12\Upsilon(s/2)\delbar_{A_\diamond} s;
\end{gather*}
see, e.g., \cite[Appendix A]{Jacob2016}.
Moreover, with $g = \bigoplus_{j=1}^k r^{\mu_j}$ we have
\begin{equation*}
  g_*e^{s/2}_* A
  = A_* + g a g^{-1}.
\end{equation*}
Since
\begin{equation*}
  \abs{\nabla_{A_*}^k gag^{-1}}_{H_*}
  =
    \abs{\nabla_{H_\diamond}^k a}_{H_\diamond}
    \qforeach k \geq 0,
\end{equation*}
\autoref{Thm_HYMConnectionTangentCones} will be a consequence of \autoref{Prop_HDiamondProperties} and the following result.

\begin{theorem}
  \label{Thm_HYMMetricTangentCones}
  Suppose $\omega = \frac{1}{2i} \delbar\del\abs{z}^2 + O(\abs{z}^2)$ is a Kähler form on $\bar B_{R}(0) \subset \C^n$,
  $\sE$ is a holomorphic vector bundle over $\bar B_R(0)\setminus\set{0}$,
  and $H_\diamond$ is a Hermitian metric on $\sE$ which is HYM with respect to $\omega_0$ and such that \eqref{Eq_HDiamondCurvatureBounds} holds.
  If $H$ is an admissible HYM metric on $\sE|_{B_R(0)}$ with $\sing(A_H) = \set{0}$ and $\det H = \det H_\diamond$, then $s := \log (H_\diamond^{-1}H) \in C^\infty(B_R(0)\setminus\set{0},i\su(\pi^*F,H_\diamond))$ satisfies
  \begin{equation*}
    \abs{s} \leq C_0 \qandq
    \abs{z}^k\abs{\nabla_{H_\diamond}^k s} \leq C_k(-\log \abs{z})^{-1/2}
    \qforeach k\geq 1.
  \end{equation*}
  Moreover, if \eqref{Eq_HDiamondDirichletPoincare} holds, then
  \begin{equation*}
    \abs{z}^k\abs{\nabla_{H_\diamond}^k s} \leq D_k \abs{z}^{\alpha}
    \qforeach k\geq 0.
  \end{equation*}
  The constants $C_k,D_k,\alpha > 0$ depend on $\omega$, $H_\diamond$, $s|_{B_{R}(0)\setminus B_{R/2}(0)}$, and $\Abs{\rF_H}_{L^2(B_R(0))}$.
\end{theorem}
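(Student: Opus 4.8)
The plan is to convert the two Hermitian--Yang--Mills equations into a single quasilinear elliptic equation for $s$, bound $s$ in $C^0$ by a subharmonicity argument, and then obtain decay by running an energy estimate on the spheres $\del B_r(0)$, with the Poincaré inequality \eqref{Eq_HDiamondDirichletPoincare} providing the spectral gap behind the polynomial rate in the simple case. To set up the equation, write $H = H_\diamond e^{s}$ and substitute the standard identity $\rF_H = \rF_{H_\diamond} + \delbar_{A_\diamond}\bigl(e^{-s}\del_{A_\diamond}e^{s}\bigr)$ into the trace--free condition $i\Lambda_\omega \rF_H = 0$. Subtracting $i\Lambda_{\omega_0}\rF_{H_\diamond} = 0$ and expanding yields
\begin{equation*}
  \Delta_{A_\diamond}s + i\Lambda_{\omega_0}[\rF_{H_\diamond},s] = \mathcal{N}(s,\nabla_{A_\diamond}s) + \mathcal{E},
\end{equation*}
where $\Delta_{A_\diamond}$ is the Laplacian of $A_\diamond$, the nonlinearity $\mathcal{N}$ is at least quadratic, and $\mathcal{E} = i(\Lambda_{\omega_0}-\Lambda_\omega)\rF_{H_\diamond}$ is the error produced by $\omega - \omega_0 = O(\abs{z}^2)$. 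The key structural point is the behaviour under the rescaling $z \mapsto rz$: by \eqref{Eq_HDiamondCurvatureBounds} one has $\abs{\nabla_{A_\diamond}^\ell \rF_{H_\diamond}} \lesssim \abs{z}^{-2-\ell}$, so the coupling term $i\Lambda_{\omega_0}[\rF_{H_\diamond},s]$ is scale--invariant (Hardy--critical), while $\mathcal{E}$ becomes $O(r^2)$ and is therefore negligible at small scales.

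With the equation in hand, I would first establish $\abs{s}\le C_0$. Since $H$ and $H_\diamond$ are both HYM, the Donaldson--Simpson functions $\log\tr(e^{\pm s})$ are subharmonic up to the controlled error above; as an isolated singularity of a reflexive sheaf forces $n\ge 3$, the point $\set{0}$ has vanishing $W^{1,2}$--capacity and $\Abs{\rF_{H_\diamond}}_{L^2},\Abs{\rF_H}_{L^2}$ are finite, so the singularity is removable for this differential inequality and the maximum principle bounds $\abs{s}$ by its boundary data on $\del B_R$. Rescaling each dyadic annulus $B_{2r}(0)\setminus B_{r/2}(0)$ to unit size---where, by the scale--invariance just noted, the rescaled data are uniformly controlled---and applying interior elliptic estimates then upgrades this to $\abs{z}^k\abs{\nabla_{A_\diamond}^k s}\le C_k$ for all $k$, which both proves the first displayed bound of the theorem and seeds the bootstrap.

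The crux is the decay of $I(r) := \int_{\del B_r}\abs{s}^2$. Pairing the equation with $s$ and integrating by parts over an annulus relates $I$, its derivative, and $\int\abs{\nabla_{A_\diamond}s}^2$; the only term at the critical rate is $\int_{\del B_r}\abs{z}^{-2}\abs{s}^2$ coming from the coupling $[\rF_{H_\diamond},s]$, and this is precisely what \eqref{Eq_HDiamondDirichletPoincare} absorbs. In the simple case the Poincaré inequality has a genuine gap, so after shrinking $R$ (using the $C^0$ bound to make the quadratic $\mathcal{N}$ small and the rescaling to make $\mathcal{E}$ small) I would derive a differential inequality of the form $\frac{d}{dr}\bigl(r^{-2\alpha}I(r)\bigr)\le 0$ modulo higher--order terms, for a gap exponent $\alpha > 0$; integrating gives $I(r)\lesssim r^{2\alpha}$, and feeding this back through the rescaled interior estimates yields $\abs{z}^k\abs{\nabla_{A_\diamond}^k s}\lesssim\abs{z}^{\alpha}$. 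In the general case \eqref{Eq_HDiamondDirichletPoincare} fails---by the proof of \autoref{Prop_HDiamondProperties} the parallel endomorphisms acting as the distinct scalars $\mu_j$ on the summands $\sF_j$ lie in $\ker\nabla_B$---so there is no gap, $s$ need only converge to a nonzero limit, and only its gradient decays. Here I would instead exploit the finiteness of the total energy $\int_{B_R}\abs{\nabla_{A_\diamond}s}^2 < \infty$ together with a gapless, degenerate almost--monotonicity inequality to force the logarithmic rate $\int_{\del B_r}\abs{\nabla_{A_\diamond}s}^2\lesssim(-\log\abs{z})^{-1}$, i.e.\ $\abs{z}\abs{\nabla_{A_\diamond}s}\lesssim(-\log\abs{z})^{-1/2}$, whence the stated bounds follow by the same rescaled elliptic estimates.

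I expect the energy step to be the main obstacle. The coupling term sits at the Hardy--critical rate $\abs{z}^{-2}$, so absorbing the resulting zeroth--order contribution is not a matter of having slack but genuinely consumes the sharp constant in \eqref{Eq_HDiamondDirichletPoincare}; one must check carefully that $\mathcal{N}$ and $\mathcal{E}$ are of strictly higher order so that, after shrinking $R$, they do not destroy the gap. More seriously, in the non--simple case the absence of a gap means the naive energy argument gives only convergence with no rate, and extracting the precise $(-\log\abs{z})^{-1/2}$ requires identifying the correct almost--monotone quantity---exactly the delicate point, since the degeneracy is caused by the reducibility of $\sF$ into several $\mu$--stable summands. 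Once the sphere--energy decay is established in either case, the passage to all higher derivatives is routine rescaled Schauder theory and presents no difficulty.
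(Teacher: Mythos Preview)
Your overall architecture matches the paper: a $C^0$ bound via the subharmonicity of $\log\tr e^{\pm s}$ together with the vanishing $W^{1,2}$--capacity of $\set{0}$ (this is exactly \autoref{Prop_APrioriC0Bound}), followed by a decay estimate, followed by rescaled interior elliptic estimates on dyadic annuli to convert $L^2$ decay into pointwise bounds (this is the final proof section). The first and last steps are essentially as in the paper.

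The middle step is where you diverge, and your account of it contains a misconception. You locate the role of the Poincaré inequality \eqref{Eq_HDiamondDirichletPoincare} in absorbing a ``Hardy--critical'' coupling $\langle i\Lambda_{\omega_0}[\rF_{H_\diamond},s],s\rangle$ in the energy identity, and worry that this consumes the sharp constant. But $\tr\bigl([\rF_{H_\diamond},s]\,s\bigr)=0$ by cyclicity, so that term vanishes identically when paired with $s$; there is no critical zeroth--order term to absorb, and no sharp constant is at stake. The paper instead works with the scalar inequality $\abs{\nabla_{H_\diamond}s}^2 \lesssim 1 - \Delta\abs{s}^2$ (\autoref{Prop_NablaS}), whose right--hand side involves only $\langle\rK_{H_\diamond},s\rangle$, bounded because $H_\diamond$ is HYM for $\omega_0$ and $\omega-\omega_0=O(\abs{z}^2)$. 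This is integrated against the Green's function $G\sim\abs{z}^{2-2n}$ with cutoffs at scales $r$ and $\epsilon$, producing the monotone quantity $g(r):=\int_{B_r}\abs{z}^{2-2n}\abs{\nabla_{H_\diamond}s}^2$. The Poincaré inequality enters only to convert the boundary term $\int_{B_{2r}\setminus B_r}\abs{z}^{-2n}\abs{s}^2$ (coming from $\Delta\chi_r\cdot G$) into $g(2r)-g(r)$, yielding $g(r)\le\gamma\,g(2r)+Ar^2$ with some $\gamma<1$; any constant in \eqref{Eq_HDiamondDirichletPoincare} suffices.

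For the general case your proposal is vague, and the paper's device is concrete and worth knowing: in the absence of \eqref{Eq_HDiamondDirichletPoincare}, one integrates by parts once to replace the $\abs{z}^{-2n}\abs{s}^2$ boundary term by $\abs{z}^{1-2n}\partial_r\abs{s}^2$, and then Cauchy--Schwarz against the already--established $C^0$ bound on $s$ gives $g(r)\lesssim(g(2r)-g(r))^{1/2}+r^2$. An elementary lemma (\autoref{Prop_LogarithmicGrowthEstimate}) then forces $g(r)\lesssim(-\log r)^{-1}$. This is the ``degenerate almost--monotonicity'' you allude to, but the mechanism is purely the square root in the recursion, not any residual spectral information.
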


The next three sections of this paper are devoted to proving \autoref{Thm_HYMMetricTangentCones}.
Without loss of generality, we will assume that the radius $R$ is one.

\paragraph{Conventions and notation.}

Set $B_r := B_r(0)$, $B := B_1$, and $\dot B := B\setminus\set{0}$.

We denote by $c > 0$ a generic constant, which depends only on $\sF$, $\omega$, $s|_{B_1\setminus B_{1/2}}$, $H_\diamond$, and $\Abs{\rF_H}_{L^2(B_R(0))}$.
Its value might change from one occurrence to the next.
Should $c$ depend on further data we indicate this by a subscript.
We write $x \lesssim y$ for $x \leq c y$. The expression $O(x)$ denotes a quantity $y$ with $\abs{y} \lesssim x$.

Since reflexive sheaves are locally free away from a closed subset of complex codimension three, without loss of generality, we will assume throughout that $n \geq 3$.


\section{A priori \texorpdfstring{$C^0$}{C0} estimate}
\label{Sec_APrioriC0Estimate}

As a first step towards proving \autoref{Thm_HYMMetricTangentCones} we bound $\abs{s}$, using an argument which is essentially contained in \citet[Theorem 2(a) and (b)]{Bando1994}.

\begin{prop}
  \label{Prop_APrioriC0Bound}
  We have $\abs{s} \in L^\infty(B)$ and $\Abs{s}_{L^\infty(B)} \leq c$.
\end{prop}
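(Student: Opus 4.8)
The plan is to show that $\abs s$ is a weak subsolution of a uniformly elliptic equation with bounded inhomogeneity on $\dot B$, to extend this inequality across the singular point $0$, and then to conclude by the maximum principle using the boundary data on $B\setminus B_{1/2}$, following the strategy of \citet[Theorem 2]{Bando1994}.

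\emph{A differential inequality.} Write $h := H_\diamond^{-1}H$; this is a positive $H_\diamond$--self-adjoint endomorphism with $s = \log h$. The Chern connections of $H$ and $H_\diamond$ are related by $\del_{A_H} = \del_{A_{H_\diamond}} + h^{-1}\del_{A_{H_\diamond}}h$, which gives the standard identity
\begin{equation*}
  i\Lambda_\omega \rF_H - i\Lambda_\omega \rF_{H_\diamond}
  = i\Lambda_\omega \delbar_{A_{H_\diamond}}\!\big(h^{-1}\del_{A_{H_\diamond}}h\big).
\end{equation*}
Since a $\P$HYM connection is a harmonic map into the nonpositively curved space of Hermitian inner products, the distance $\abs s$ between $H$ and $H_\diamond$ satisfies the Kobayashi–Simpson inequality
\begin{equation*}
  \Delta_\omega \abs s \ge -\,\abs*{\,i\Lambda_\omega \rF_H - i\Lambda_\omega \rF_{H_\diamond}\,}
\end{equation*}
weakly on $\dot B$, where $\Delta_\omega$ is the Laplacian of $\omega$ with the convention that subharmonic functions satisfy $\Delta_\omega(\,\cdot\,) \ge 0$.

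\emph{The inhomogeneity is bounded.} Because $\det H = \det H_\diamond$, the traces of $i\Lambda_\omega \rF_H$ and $i\Lambda_\omega \rF_{H_\diamond}$ agree (both equal $i\Lambda_\omega$ of the curvature of the common induced metric on $\det\sE$), so the right-hand side above is trace-free. Its trace-free part is controlled as follows: since $A_H^\circ$ is HYM with respect to $\omega$ we have $(i\Lambda_\omega \rF_H)^\perp = 0$, and since $H_\diamond$ is HYM with respect to $\omega_0$ we have $(i\Lambda_{\omega_0}\rF_{H_\diamond})^\perp = 0$, whence
\begin{equation*}
  i\Lambda_\omega \rF_H - i\Lambda_\omega \rF_{H_\diamond}
  = -\,i\big(\Lambda_\omega - \Lambda_{\omega_0}\big)\rF_{H_\diamond}^{\perp}.
\end{equation*}
As $\omega = \omega_0 + O(\abs z^2)$ the operator $\Lambda_\omega - \Lambda_{\omega_0}$ has pointwise size $O(\abs z^2)$, while \eqref{Eq_HDiamondCurvatureBounds} gives $\abs{\rF_{H_\diamond}} \lesssim \abs z^{-2}$. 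Hence the inhomogeneity is $O(1)$ and $\Delta_\omega \abs s \ge -c$ weakly on $\dot B$.

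\emph{Removing the singularity and concluding.} To upgrade $\Delta_\omega\abs s \ge -c$ from $\dot B$ to all of $B$ it suffices to know $s \in W^{1,2}_{\loc}(B)$: given $0 \le \varphi \in C^\infty_c(B)$, one tests the inequality against $\chi_\epsilon\varphi$ for a logarithmic cutoff $\chi_\epsilon$ vanishing near $0$ and lets $\epsilon \to 0$; the error term is bounded by $\Abs{\nabla s}_{L^2(\supp\nabla\chi_\epsilon)}\Abs{\nabla\chi_\epsilon}_{L^2}$, which tends to $0$ because a point has vanishing $2$--capacity in real dimension $2n \ge 6$. The required integrability $s \in W^{1,2}_{\loc}(B)$ follows from $\rF_H \in L^2$ (admissibility) together with $\abs{\rF_{H_\diamond}} \lesssim \abs z^{-2}$ and $n \ge 3$, which make $\rF_{H_\diamond} \in L^2(B)$. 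Once $\Delta_\omega \abs s \ge -c$ holds on $B$, let $\psi \ge 0$ solve $\Delta_\omega \psi = -c$ with $\psi|_{\del B} = 0$; then $\psi$ is bounded and $\abs s - \psi$ is weakly subharmonic on $B$, so the maximum principle gives $\sup_B \abs s \le \sup_{\del B}\abs s + \Abs{\psi}_{L^\infty} \le c$, the boundary term being controlled by $s|_{B\setminus B_{1/2}}$. The main obstacle is this removable–singularity step: one must ensure that no negative distributional mass is created at $0$ when passing from $\dot B$ to $B$, which is exactly where admissibility ($\rF_H \in L^2$) and the codimension of $\set{0}$ enter and is the technical heart of the Bando–Siu estimate we are adapting.
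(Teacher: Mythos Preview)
Your overall strategy matches the paper's, but there is a genuine gap at the step you flag as ``the technical heart'': the assertion that $s \in W^{1,2}_{\loc}(B)$ (or even $\abs{s} \in W^{1,2}_{\loc}(B)$) follows from $\rF_H, \rF_{H_\diamond} \in L^2(B)$. You give no argument for this, and there is no direct implication: the curvatures and $\nabla s$ are related nonlinearly and through one more derivative, and the Bochner-type identity $\abs{\upsilon(-s)\nabla_{H_\diamond}s}^2 \lesssim -\inner{\rK_{H_\diamond}}{s} - \tfrac12\Delta\abs{s}^2$ cannot be exploited without already knowing some integrability of $s$ near $0$. Your cutoff error term $\Abs{\nabla s}_{L^2(\supp\nabla\chi_\epsilon)}\Abs{\nabla\chi_\epsilon}_{L^2}$ only goes to zero if the first factor is at least bounded, which is exactly what is unproved.

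The paper fills this gap by working with $f := \log\tr e^s$ rather than $\abs{s}$ and by a \emph{slicing argument}: restrict to complex $1$--dimensional slices $\pi^{-1}(\zeta)$ (avoiding the origin), apply \eqref{Eq_LogTrHSubsolution} on each slice, multiply by a cutoff, and use the Dirichlet--Poincaré inequality to control $\int_{\pi^{-1}(\zeta)}\abs{f_\zeta}^2 + \abs{\nabla_\zeta f_\zeta}^2$ by $\int_{\pi^{-1}(\zeta)}\abs{\rF_H}^2 + \abs{\rF_{H_\diamond}}^2 + c$. Integrating over $\zeta$ and varying the projection direction gives $f \in W^{1,2}(B)$ with a quantitative bound. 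Only then does the cutoff argument (your ``removing the singularity'' step) go through; the paper finishes with Moser iteration (equivalent to your maximum-principle comparison) and the elementary bound $\abs{s} \leq \rk(\sE)\cdot\log\tr e^s$. The slicing is the missing idea in your write-up.
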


\begin{proof}
  The proof relies on the differential inequality
  \begin{equation}
    \label{Eq_LogTrHSubsolution}
    \Delta \log \tr H_0^{-1}H_1 \lesssim \abs{\rK_{H_1} - \rK_{H_0}}
  \end{equation}
  for Hermitian metrics $H_0$ and $H_1$ with $\det H_0 = \det H_1$, and with
  \begin{equation*}
    \rK_H := i\Lambda \rF_H - \frac{\tr(i\Lambda \rF_H)}{\rk E}\cdot \id_E;
  \end{equation*}
  see \cite[p.~13]{Siu1987} for a proof.

  \setcounter{step}{0}
  \begin{step}
    We have $\log \tr e^s \in W^{1,2}(B)$ and $\Abs*{\log \tr e^s}_{W^{1,2}(B)} \leq c$.
  \end{step}

  Choose $1 \leq i < j \leq n$ and define the projection $\pi\co B \to \C^{n-2}$ by $\pi(z) := (z_1,\ldots, \hat z_i, \ldots \hat z_j,\ldots, z_n)$.
  For $\zeta \in \C^{n-2}$, denote by $\nabla_\zeta$ and $\Delta_\zeta$ the derivative and the Laplacian on the slice $\pi^{-1}(\zeta)$ respectively.
  Set $f_\zeta := \log \tr e^s|_{\pi^{-1}(\zeta)}$.
  Applying \eqref{Eq_LogTrHSubsolution} to $H|_{\pi^{-1}(\zeta)}$ and $H_\diamond|_{\pi^{-1}(\zeta)}$ we obtain 
  \begin{equation*}
    \Delta_\zeta f_\zeta
    \lesssim
    \abs{\rF_H} + \abs{\rF_{H_\diamond}}.
  \end{equation*}
  Fix $\chi \in C^\infty(\C^2;[0,1])$ such that $\chi(\eta) = 1$ for $\abs{\eta} \leq 1/2$ and $\chi(\eta) = 0$ for $\abs{\eta} \geq 1/\sqrt{2}$.
  For $0 < \abs{\zeta} \leq 1/\sqrt{2}$ and $\epsilon > 0$,  we have
  \begin{align*}
    \int_{\pi^{-1}(\zeta)} \abs{\nabla_\zeta (\chi f_\zeta)}^2
    &\lesssim
      \int_{\pi^{-1}(\zeta)} \chi^2 f_\zeta (\abs{\rF_H}+\abs{\rF_{H_\diamond}})
      + c \\
    &\leq
      \epsilon\int_{\pi^{-1}(\zeta)} \abs{\chi f_\zeta}^2
      + \epsilon^{-1} \int_{\pi^{-1}(\zeta)} \abs{\rF_H}^2+\abs{\rF_{H_\diamond}}^2
      + c.
  \end{align*}
  Using the Dirichlet--Poincaré inequality and rearranging, we obtain
  \begin{equation*}
    \int_{\pi^{-1}(\zeta)} \abs{\chi f_\zeta}^2 + \abs{\nabla_\zeta(\chi f_\zeta)}^2 
    \lesssim
    \int_{\pi^{-1}(\zeta)} \abs{\rF_H}^2+\abs{\rF_{H_\diamond}}^2 + c.
  \end{equation*}
  Integrating over $0 < \abs{\zeta} \leq 1/\sqrt{2}$ yields
  \begin{equation*}
    \int_B \abs{\log \tr e^s}^2 + \abs{\nabla' \log \tr e^s}^2 
    \lesssim
      \int_B \abs{\rF_H}^2+\abs{\rF_{H_\diamond}}^2 + c
  \end{equation*}
  with $\nabla'$ denoting the derivative along the fibres of $\pi$.
  Using \eqref{Eq_HDiamondCurvatureBounds} and $n \geq 3$, $\rF_{H_\diamond} \in L^2(B)$.
  Since the choice of $i,j$ defining $\pi$ was arbitrary, the asserted inequality follows.

  \begin{step}
    \label{Step_LogTrHWeakSubsolution}
    The differential inequality
    \begin{equation*}
      \Delta \log \tr e^s \lesssim \abs{\rK_{H_\diamond}}
    \end{equation*}
    holds on $B$ in the sense of distributions.
  \end{step}

  Fix a smooth function $\chi \co [0,\infty) \to [0,1]$ which vanishes on $[0,1]$ and is equal to one on $[2,\infty)$.
  Set $\chi_\epsilon := \chi(\abs{\cdot}/\epsilon)$.
  By \eqref{Eq_LogTrHSubsolution}, for $\phi \in C^\infty_0(B)$, we have
  \begin{align*}
    \int_B \Delta \phi \cdot \log \tr e^s
    &=
      \lim_{\epsilon\to 0} \int_B \chi_\epsilon \cdot \Delta \phi \cdot \log \tr e^s \\
    &\lesssim 
      \int_B \phi \cdot \abs{\rK_{H_\diamond}}
      + \lim_{\epsilon\to 0} \int_B \phi \cdot \(\Delta \chi_\epsilon \cdot \log \tr e^s - 2 \inner{\nabla\chi_\epsilon}{\nabla \log \tr e^s}\).
  \end{align*}
  Since $n \geq 3$, we have $\Abs{\chi_\epsilon}_{W^{2,2}(B)} \lesssim \epsilon^2$.
  Because $\log\tr e^s \in W^{1,2}(B)$ this shows that the limit vanishes.

  \begin{step}
    We have  $\log\tr e^s \in L^\infty(B)$ and $\Abs{\log\tr e^s}_{L^\infty(B)} \leq c$.
  \end{step}
  
  Since $\tr s = 0$, we have $\abs{s} \leq \rk(\sE) \cdot \log\tr e^s$;
  in particular, $\log \tr e^s$ is non-negative.
  By hypothesis $\rK_H = 0$.
  Since $H_\diamond$ is $\P$HYM with respect to $\omega_0$ and $\abs{\rF_{H_\diamond}} \lesssim \abs{z}^{-2}$ by hypothesis \eqref{Eq_HDiamondCurvatureBounds}, we have $\abs{\rK_{H_\diamond}} \leq c$.
  The asserted inequality thus follows from \autoref{Step_LogTrHWeakSubsolution} via Moser iteration;
  see \cite[Theorem 8.1]{Gilbarg2001}.
\end{proof}


\section{A priori Morrey estimates}
\label{Sec_APrioriMorreyEstimates}

The following decay estimates are the crucial ingredient of the proof of \autoref{Thm_HYMMetricTangentCones}.

\begin{prop}
  \label{Prop_LogarithmicMorreyBound}
  For $r \in [0,1]$ we have
  \begin{equation*}
     \int_{B_r} \abs{\nabla_{H_\diamond} s}^2 \lesssim r^{2n-2}(-\log r)^{-1}.
  \end{equation*}
\end{prop}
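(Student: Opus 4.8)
The plan is to track the rescaled Dirichlet energy
\[
  e(r) := r^{2-2n}E(r),
  \qquad
  E(r) := \int_{B_r}\abs{\nabla_{H_\diamond}s}^2,
\]
and to prove a Riccati-type differential inequality $e'(r)\ge \tfrac{c}{r}e(r)^2-\tfrac{1}{r}\,\mathrm{err}(r)$ with integrable error. Integrating this as $-(1/e)'\gtrsim 1/r$ gives $1/e(r)\gtrsim -\log r$, i.e. $e(r)\lesssim(-\log r)^{-1}$, which is exactly the asserted bound. The normalization by $r^{2n-2}$ is forced by scaling: a degree-zero (scale-invariant) profile satisfies $rE'(r)=(2n-2)E(r)$, so $e$ is constant on such profiles and $e'$ measures the deviation from scale-invariance that the nonlinearity produces.

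First I would set up an energy identity by pairing the equation $\rK_H=0$ against $s$ and integrating over $B_r$. Using the $\Upsilon$-weighted convexity structure behind \eqref{Eq_LogTrHSubsolution} (so that the terms quadratic in $\nabla_{H_\diamond}s$ enter with favourable sign), the $C^0$ bound $\abs{s}\lesssim 1$ from \autoref{Prop_APrioriC0Bound}, the bound $\abs{\rK_{H_\diamond}}\lesssim 1$ coming from \eqref{Eq_HDiamondCurvatureBounds}, and $\det H=\det H_\diamond$, I expect
\[
  E(r)\le \int_{\del B_r}\inner{\partial_\nu s}{s} + O(r^{2n}),
\]
where $O(r^{2n})$ absorbs the bulk contribution of $\rK_{H_\diamond}$ and of the perturbation $\omega-\omega_0=O(\abs{z}^2)$, and is negligible against the target $r^{2n-2}(-\log r)^{-1}$. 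Cauchy–Schwarz on the link then gives $E(r)\le E'(r)^{1/2}I(r)^{1/2}+O(r^{2n})$ with $I(r):=\int_{\del B_r}\abs{s}^2$ and $E'(r)=\int_{\del B_r}\abs{\nabla_{H_\diamond}s}^2$.

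The crux is to control $I(r)$ by a Poincaré inequality on the links $\del B_r\iso S^{2n-1}$, for which I would split $s$ into its component in $\ker\nabla_B$ (the parallel endomorphisms, which by \autoref{Prop_HDiamondProperties} are exactly the block-diagonal infinitesimal rescalings of the $\mu$-stable summands) and its $L^2$-orthogonal complement. On the complement the first nonzero eigenvalue of the link operator is positive and corresponds to a positive indicial root, so that part of the energy decays strictly faster than $r^{2n-2}$ and is negligible at small scales. The parallel part is the borderline contribution: it is constant along each sphere, its energy is purely radial, and it sits exactly at the critical power. Exclusion of the unbounded radial mode $r^{-(2n-2)}$ by the $C^0$ bound yields the plain critical balance $rE'(r)\ge(2n-2)E(r)$, which gives only boundedness of the density $e$. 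Feeding the HYM nonlinearity back into the radial equation for the parallel part—retaining the quadratic term $\sim\abs{\nabla_{H_\diamond}s}^2$ with its correct sign—should upgrade this to the strict gain $rE'(r)\ge(2n-2)E(r)+c\,r^{2-2n}E(r)^2$, that is, to the Riccati inequality above.

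The main obstacle is precisely this last step: at the borderline the linear balance alone yields only $E(r)\lesssim r^{2n-2}$ with \emph{no} logarithm, so the whole improvement comes from the quadratic self-interaction of the parallel modes, and I must (i) extract it with a definite positive sign from the convexity/$\Upsilon$ structure rather than lose it to Cauchy–Schwarz, (ii) show the cross terms with the fast-decaying complement together with the $\omega-\omega_0$ and $\rK_{H_\diamond}$ errors are integrable after normalization, and (iii) preserve the sharp constant $2n-2$ throughout, since any deficit in the linear term would swamp the quadratic gain and destroy the logarithm. I expect the simple case—where $\ker\nabla_B=0$ by \eqref{Eq_HDiamondDirichletPoincare}, giving a genuine spectral gap above $2n-2$—to be the clean degeneration of this argument producing the faster polynomial rate $r^{\alpha}$, while the logarithm is exactly the signature of the nontrivial parallel modes present when $\sF$ fails to be simple.
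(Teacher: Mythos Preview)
Your high-level mechanism is right: the logarithm comes from a Riccati-type inequality for the rescaled Dirichlet energy, and integrating $-(1/e)'\gtrsim 1/r$ gives $e\lesssim(-\log r)^{-1}$. The paper arrives at exactly this structure, but by a much shorter route that sidesteps the obstacle you flag.

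You propose to obtain the quadratic gain by decomposing $s$ on each sphere into $\ker\nabla_B$ and its orthogonal complement, using Poincar\'e on the complement, and then extracting the nonlinear self-interaction of the parallel modes with a definite sign while preserving the sharp linear constant $2n-2$. You correctly identify this extraction as the crux and leave it unresolved; in fact none of it is needed. The paper works instead with the scalar inequality of \autoref{Prop_NablaS},
\[
  \abs{\nabla_{H_\diamond}s}^2 \lesssim 1 - \Delta\abs{s}^2,
\]
tested against the Green's function $G\sim\abs{z}^{2-2n}$ with annular cutoffs. After one integration by parts the $\Delta\abs{s}^2$ term becomes an annular contribution $\int_{B_{2r}\setminus B_r}\abs{z}^{1-2n}\,\del_r\abs{s}^2$, and H\"older on the annulus gives
\[
  \int_{B_{2r}\setminus B_r}\abs{z}^{1-2n}\abs{s}\,\abs{\nabla_{H_\diamond}s}
  \le
  \Bigl(\int_{B_{2r}\setminus B_r}\abs{z}^{2-2n}\abs{\nabla_{H_\diamond}s}^2\Bigr)^{1/2}
  \Bigl(\int_{B_{2r}\setminus B_r}\abs{z}^{-2n}\abs{s}^2\Bigr)^{1/2}.
\]
The second factor is bounded by a constant using only $\abs{s}\le c$ from \autoref{Prop_APrioriC0Bound}. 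With $g(r):=\int_{B_r}\abs{z}^{2-2n}\abs{\nabla_{H_\diamond}s}^2$ this yields the discrete Riccati inequality
\[
  g(r) \lesssim \bigl(g(2r)-g(r)\bigr)^{1/2} + r^2,
\]
and an elementary iteration (\autoref{Prop_LogarithmicGrowthEstimate}) then gives $g(r)\lesssim(-\log r)^{-1}$.

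The square root---hence the quadratic structure after squaring---comes for free from H\"older combined with the $C^0$ bound on $s$; no spectral splitting on the link, no sharp constant $2n-2$, and no sign extraction from the $\Upsilon$-nonlinearity on the parallel modes is required. Your route might be salvageable, but the step you single out as the main obstacle (point (iii)) is genuinely delicate, and the paper simply avoids it.
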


\begin{prop}
  \label{Prop_MorreyBound}
  If \eqref{Eq_HDiamondDirichletPoincare} holds, then there is a constant $\alpha > 0$, depending on $\Abs{s}_{L^\infty(B)}$ in a monotone decreasing way, such that for $r \in [0,1]$ we have
  \begin{equation*}
    \int_{B_r} \abs{s}^2 \lesssim r^{2n+2\alpha} \qandq
    \int_{B_r} \abs{\nabla_{H_\diamond} s}^2 \lesssim r^{2n-2+2\alpha}.
  \end{equation*}
\end{prop}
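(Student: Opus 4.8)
The plan is to reduce both estimates to a single differential inequality for the energy
\[
  E(r) := \int_{B_r} \abs{\nabla_{H_\diamond} s}^2,
\]
namely $E(r) \leq c\,r\,E'(r)$ for $r \in (0,1]$, where by the co-area formula $E'(r) = \int_{\del B_r}\abs{\nabla_{H_\diamond}s}^2$ and the constant $c = c(\Abs{s}_{L^\infty(B)})$ is \emph{increasing} in $\Abs{s}_{L^\infty(B)}$. Integrating $(\log E)'(r) \geq (cr)^{-1}$ from $r$ to $1$ then gives $E(r) \lesssim r^{1/c}$, and setting $2\alpha := 1/c - (2n-2)$ produces the gradient bound with $\alpha$ \emph{decreasing} in $\Abs{s}_{L^\infty(B)}$, as claimed. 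The bound on $\int_{B_r}\abs{s}^2$ follows formally: writing $\int_{B_r}\abs{s}^2 = \int_0^r\bigl(\int_{\del B_\rho}\abs{s}^2\bigr)\,d\rho$, estimating the inner integral by \eqref{Eq_HDiamondDirichletPoincare} as $\lesssim \rho^2 E'(\rho)$, and integrating by parts in $\rho$ against the gradient bound yields $\int_{B_r}\abs{s}^2 \lesssim r^2 E(r) + \int_0^r \rho\,E(\rho)\,d\rho \lesssim r^{2n+2\alpha}$.

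To produce the inequality $E(r) \leq c\,r\,E'(r)$ I would begin from the Hermitian--Einstein tension identity $\rK_H - \rK_{H_\diamond} = i\Lambda_\omega \delbar(h^{-1}\del_{A_\diamond}h)$ with $h = H_\diamond^{-1}H = e^s$, use $\rK_H = 0$, and combine it with the Bochner formula for $\frac12\Delta\abs{s}^2$. The convexity of the Donaldson functional supplies a pointwise lower bound of the shape
\[
  \tfrac12\Delta\abs{s}^2 \geq c_0\,\abs{\nabla_{A_\diamond}s}^2 - \abs{s}\,\abs{\rK_{H_\diamond}} - \text{(terms from $\omega - \omega_0$)},
\]
in which the nonlinearity $h^{-1}\del_{A_\diamond}h = \Upsilon(-\ad_s)\del_{A_\diamond}s$ contributes with a favourable sign and the constant $c_0 = c_0(\Abs{s}_{L^\infty(B)})$ degrades monotonically as $\Abs{s}_{L^\infty(B)}$ grows. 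Integrating over $B_r$ turns the left side into the single boundary term $\int_{\del B_r}\langle \del_r s, s\rangle$, which I bound using Cauchy--Schwarz together with the kernel-free Dirichlet--Poincaré inequality \eqref{Eq_HDiamondDirichletPoincare}:
\[
  \int_{\del B_r}\langle \del_r s, s\rangle \leq \Bigl(\int_{\del B_r}\abs{\nabla_{H_\diamond}s}^2\Bigr)^{1/2}\Bigl(\int_{\del B_r}\abs{s}^2\Bigr)^{1/2} \lesssim r\int_{\del B_r}\abs{\nabla_{H_\diamond}s}^2 = r\,E'(r).
\]
The removability of the singularity at $0$ in these integrations is justified exactly as in \autoref{Prop_APrioriC0Bound}, since $\codim\set{0} = 2n \geq 6$.

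For the error terms, \eqref{Eq_HDiamondCurvatureBounds} gives $\abs{\rK_{H_\diamond}} \lesssim 1$ (the $\omega_0$-HYM condition is broken only through $(\Lambda_\omega - \Lambda_{\omega_0})\rF_{H_\diamond} = O(\abs{z}^2)\cdot O(\abs{z}^{-2})$), so together with \autoref{Prop_APrioriC0Bound} the bulk error is $\lesssim \int_{B_r}\abs{s} \lesssim r^{2n}$, while the $\omega - \omega_0$ corrections to the leading term carry an extra factor $\abs{z}^2$ and are absorbed into $c_0\abs{\nabla_{A_\diamond}s}^2$. Thus one in fact obtains $E(r) \leq c\,r\,E'(r) + c\,r^{2n}$, and since the inhomogeneity $r^{2n}$ forces at worst $\alpha \leq 1$, this is harmless for the Gronwall conclusion. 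The main obstacle is the quantitative convexity step: one must extract an \emph{honest} positive multiple $c_0(\Abs{s}_{L^\infty(B)})$ of $\abs{\nabla_{A_\diamond}s}^2$ from $\langle i\Lambda_\omega\delbar(h^{-1}\del_{A_\diamond}h), s\rangle$, controlling the quadratic-in-$\nabla s$ remainder through the eigenvalues of $\ad_s$, since this constant is the sole source of the monotone dependence of $\alpha$ on $\Abs{s}_{L^\infty(B)}$. This is precisely where simplicity of $\sF$ enters: it is what makes \eqref{Eq_HDiamondDirichletPoincare} hold with trivial kernel, so the boundary term feeds back into $E'(r)$ with a strictly favourable constant; without it one controls only the non-central part of $s$ and recovers instead the logarithmic decay of \autoref{Prop_LogarithmicMorreyBound}.
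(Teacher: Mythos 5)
Your overall architecture --- reduce to \autoref{Prop_NablaS}, integrate, control the boundary term via Cauchy--Schwarz and \eqref{Eq_HDiamondDirichletPoincare}, and deduce the $\abs{s}^2$ bound from the gradient bound by the co-area formula --- is the right one, and several of your side remarks (the role of simplicity, $\abs{\rK_{H_\diamond}}\lesssim 1$, the harmlessness of the $r^{2n}$ bulk error) are correct. But the central differential inequality you aim for, $E(r)\leq c\,r\,E'(r)$ with $E(r)=\int_{B_r}\abs{\nabla_{H_\diamond}s}^2$, is quantitatively too weak to prove the proposition, and this is a genuine gap rather than a bookkeeping issue. Integrating it gives only $E(r)\lesssim r^{1/c}$, so your definition $2\alpha:=1/c-(2n-2)$ requires $c<\tfrac{1}{2n-2}$. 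Your $c$ is (at best) of the form $2\sqrt{C_P}/c_0$, where $C_P$ is the constant in \eqref{Eq_HDiamondDirichletPoincare} and $c_0\leq 2$ is the convexity constant; since $C_P$ is governed by the first eigenvalue of $\nabla_B^*\nabla_B$ on the link and can be arbitrarily large, there is no reason for $1/c$ to exceed $2n-2$. Indeed, even in the most favourable case (functions on the round $S^{2n-1}$, $C_P=1/(2n-1)$, $c_0=2$) one gets $1/c=\sqrt{2n-1}<2n-2$ for $n\geq 3$. The difficulty is structural: $E(r)\sim r^{2n-2}$ is the \emph{trivial} scaling for a bounded section with $\abs{z}\abs{\nabla s}$ bounded, and a first-order inequality $E\leq crE'$ with a non-sharp constant cannot see past that scaling; no amount of care in the "quantitative convexity step" you flag will rescue it.

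The paper circumvents exactly this by normalizing out the trivial scaling before running the iteration: it works with $g(r):=\int_{B_r}\abs{z}^{2-2n}\abs{\nabla_{H_\diamond}s}^2$, obtained by testing $\abs{\nabla_{H_\diamond}s}^2\lesssim 1-\Delta\abs{s}^2$ against (cut-offs of) the Green's function $G\sim\abs{z}^{2-2n}$. Because $G$ is harmonic away from $0$, the two integrations by parts dump the error onto the annuli $B_{2r}\setminus B_r$ and $B_\epsilon\setminus B_{\epsilon/2}$, where \eqref{Eq_HDiamondDirichletPoincare} converts it into the \emph{increment} $g(2r)-g(r)$ (plus a term that vanishes as $\epsilon\to 0$). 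This yields first $g\leq c$ and then the doubling estimate $g(r)\leq\gamma g(2r)+Ar^2$ with $\gamma=\tfrac{c}{c+1}<1$ for \emph{every} value of $c$; iterating over dyadic scales gives $g(r)\lesssim r^{2\alpha}$ with $\alpha>0$ no matter how large the constants are, and hence $E(r)\leq r^{2n-2}g(r)\lesssim r^{2n-2+2\alpha}$. If you want to salvage your write-up, replace $E$ by $g$ (or, equivalently, insert the Green's function as a weight in your integration by parts) so that the boundary term is compared to $g(2r)-g(r)$ rather than to $rE'(r)$.
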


Both of these results rely on the following inequality.

\begin{prop}
  \label{Prop_NablaS}  
  We have
  \begin{equation*}
    \abs{\nabla_{H_\diamond} s}^2
    \lesssim
    1 - \Delta \abs{s}^2.
  \end{equation*}
\end{prop}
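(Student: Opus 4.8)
The plan is to derive a pointwise Bochner-type identity for $\abs{s}^2 = \tr(s^2)$ with respect to the Chern connection $A_\diamond$ of $H_\diamond$, and then to feed in the three facts already available: $\rK_H = 0$ (since $H$ is HYM), the bound $\abs{\rK_{H_\diamond}} \leq c$ established in \autoref{Sec_APrioriC0Estimate} from the curvature estimate \eqref{Eq_HDiamondCurvatureBounds}, and the a priori $C^0$ bound $\Abs{s}_{L^\infty(B)} \leq c$ of \autoref{Prop_APrioriC0Bound}. First I would differentiate $\abs{s}^2$ twice. Using $\del\tr(s^2) = 2\tr(s\,\del_{A_\diamond}s)$ and then applying $\delbar$ and contracting with $i\Lambda$ (recall $i\Lambda_{\omega_0}\delbar\del = \tfrac12\Delta$ in the conventions of Section 2, the discrepancy $\omega = \omega_0 + O(\abs{z}^2)$ affecting only constants on $B$), one obtains
\[
  \tfrac12\Delta\abs{s}^2
  = 2\tr\bigl(s\cdot i\Lambda\,\delbar_{A_\diamond}\del_{A_\diamond}s\bigr) - \abs{\nabla_{H_\diamond}s}^2,
\]
where the negative term comes from $i\Lambda\tr(\delbar_{A_\diamond}s\wedge\del_{A_\diamond}s) = -\abs{\del_{A_\diamond}s}^2$ together with $\abs{\nabla_{H_\diamond}s}^2 = 2\abs{\del_{A_\diamond}s}^2$ for self-adjoint $s$.

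The remaining task is to relate $\tr(s\cdot i\Lambda\,\delbar_{A_\diamond}\del_{A_\diamond}s)$ to the mean curvatures. Writing $h = e^s$, the standard formula for the change of Chern curvature gives $\rF_H - \rF_{H_\diamond} = \delbar_{A_\diamond}\bigl(h^{-1}\del_{A_\diamond}h\bigr)$, and since $\tr s = 0$ the trace parts drop out, so that $\inner{s}{\rK_H - \rK_{H_\diamond}} = \tr\bigl(s\cdot i\Lambda\,\delbar_{A_\diamond}(h^{-1}\del_{A_\diamond}h)\bigr)$. When $s$ commutes with its derivatives one has $h^{-1}\del_{A_\diamond}h = \del_{A_\diamond}s$, the two expressions coincide, and the identity above reads $\tfrac12\Delta\abs{s}^2 = 2\inner{s}{\rK_H - \rK_{H_\diamond}} - \abs{\nabla_{H_\diamond}s}^2$; this already yields the claim. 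In general $h^{-1}\del_{A_\diamond}h = \Upsilon(-\ad_s)\del_{A_\diamond}s$, and I would compare the two terms by diagonalising $s$ in a local $H_\diamond$-unitary frame with eigenvalues $\lambda_1,\dots,\lambda_r$. The difference $\tr(s\cdot i\Lambda\,\delbar_{A_\diamond}\del_{A_\diamond}s) - \inner{s}{\rK_H - \rK_{H_\diamond}}$ is then a quadratic form in the entries of $\del_{A_\diamond}s$ whose coefficients are explicit functions of the $\lambda_p - \lambda_q$; combining it with the $-\abs{\nabla_{H_\diamond}s}^2$ term above leads to
\[
  \tfrac12\Delta\abs{s}^2
  = 2\inner{s}{\rK_H - \rK_{H_\diamond}} - \sum_{p,q}\Psi(\lambda_p,\lambda_q)\,\bigl\lvert(\del_{A_\diamond}s)_{pq}\bigr\rvert^2,
\]
with $\Psi$ a positive spectral function.

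To finish, I would use $\rK_H = 0$ and bound $\abs{\inner{s}{\rK_H - \rK_{H_\diamond}}} = \abs{\inner{s}{\rK_{H_\diamond}}} \leq \abs{s}\,\abs{\rK_{H_\diamond}} \leq c$, invoking \autoref{Prop_APrioriC0Bound} and the bound on $\rK_{H_\diamond}$. Then rearranging the displayed identity gives $\sum_{p,q}\Psi(\lambda_p,\lambda_q)\abs{(\del_{A_\diamond}s)_{pq}}^2 \lesssim 1 - \Delta\abs{s}^2$, and the proposition follows once $\Psi$ is bounded below by a positive constant. This coercivity is the main obstacle and is exactly where the $C^0$ bound is essential: because $\abs{s} \leq c$, the eigenvalues $\lambda_p$—and hence the arguments $\lambda_p - \lambda_q$ of the eigenvalues of $\ad_s$—lie in a fixed bounded interval, on which the smooth positive function $\Psi$ attains a positive minimum. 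Thus $\abs{\nabla_{H_\diamond}s}^2 = 2\sum_{p,q}\abs{(\del_{A_\diamond}s)_{pq}}^2 \lesssim \sum_{p,q}\Psi(\lambda_p,\lambda_q)\abs{(\del_{A_\diamond}s)_{pq}}^2 \lesssim 1 - \Delta\abs{s}^2$, as required. The only genuinely delicate point is verifying the sign and lower bound of $\Psi$; I expect this to reduce to the elementary observation that the relevant combination of $\Upsilon$-type factors is positive on any bounded spectral range, so that no cancellation with the leading $-\abs{\nabla_{H_\diamond}s}^2$ term can occur.
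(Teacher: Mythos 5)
Your proposal is correct and follows essentially the same route as the paper: the paper simply quotes the differential inequality $\Delta\abs{s}^2 + 2\abs{\upsilon(-s)\nabla_{H_\diamond}s}^2 \leq -4\inner{\rK_{H_\diamond}}{s}$ with $\upsilon(-s) = \sqrt{(1-e^{-\ad_s})/\ad_s}$ from \cite[Proposition A.6]{Jacob2016}, which is precisely the Bochner/spectral-function identity you derive by hand, your $\Psi(\lambda_p,\lambda_q)$ being the square of the eigenvalues of $\upsilon(-s)$. The coercivity you flag as the delicate point is handled in the paper by the elementary bound $\sqrt{(1-e^{-x})/x} \gtrsim (1+\abs{x})^{-1/2}$ combined with the $C^0$ bound of \autoref{Prop_APrioriC0Bound}, exactly as you anticipate.
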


\begin{proof}
  Since $H = H_\diamond e^s$ is $\P$HYM, we have
  \begin{equation*}
    \Delta \abs{s}^2 + 2\abs{\upsilon(-s)\nabla_{H_\diamond}s}^2
    \leq
      -4\inner{\rK_{H_\diamond}}{s}
  \end{equation*}
  with
  \begin{equation*}
    \upsilon(-s)
    =
      \sqrt{\frac{1-e^{-\ad_s}}{\ad_s}} \in \End(\gl(E));
  \end{equation*}
  see, e.g., \cite[Proposition A.6]{Jacob2016}.
  The assertion follows using
  \begin{equation*}
    \sqrt{\frac{1-e^{-x}}{x}}
    \gtrsim
      \frac{1}{\sqrt{1+\abs{x}}},
  \end{equation*}
  $\Abs{\rK_{H_\diamond}}_{L^\infty} \leq c$, which is a consequence of \eqref{Eq_HDiamondCurvatureBounds}, and the bound on $\abs{s}$ established in \autoref{Prop_APrioriC0Bound}.
\end{proof}

\begin{proof}[Proof of \autoref{Prop_MorreyBound}]
  Define $g \co [0,1/2] \to [0,\infty]$ by
  \begin{equation*}
    g(r)
    :=
    \int_{B_r} \abs{z}^{2-2n}\abs{\nabla_{H_\diamond}s}^2.
  \end{equation*} 
  We will show that
  \begin{equation*}
    g(r) \leq cr^{2\alpha},
  \end{equation*}
  which implies the second asserted inequality and using \eqref{Eq_HDiamondDirichletPoincare} also the first.

  \setcounter{step}{0}
  \begin{step}
    \label{Step_GBounded}
    We have $g \leq c$.
  \end{step}

  Fix a smooth function $\chi \co [0,\infty) \to [0,1]$ which is equal to one on $[0,1]$ and vanishes outside $[0,2]$.
  Set $\chi_{r}(\cdot) := \chi(\abs{\cdot}/r)$.
  For $r > \epsilon > 0$, using \autoref{Prop_NablaS} and \autoref{Prop_APrioriC0Bound}, and with $G$ denoting Green's function on $B$ centered at $0$, we have
  \begin{align*}
    \int_{B_r\setminus B_\epsilon} \abs{z}^{2-2n}\abs{\nabla_{H_\diamond}s}^2
    &\lesssim
      \int_{B_{2r}\setminus B_{\epsilon/2}} \chi_r(1-\chi_{\epsilon/2}) G(1-\Delta\abs{s}^2) \\
    &\lesssim
      \int_{B_{2r}\setminus B_r} \abs{z}^{-2n}\abs{s}^2
      + r^2
      + \epsilon^{-2n}\int_{B_\epsilon\setminus B_{\epsilon/2}} \abs{s}^2 \\
    &\leq
      c.
  \end{align*}

  \begin{step}
    \label{Step_GDoublingBound}
    There are constants $\gamma \in [0,1)$ and $A > 0$ such that
    \begin{equation*}
      g(r) \leq \gamma g(2r) + A r^2.
    \end{equation*}
  \end{step}

  Continuing the inequality from \autoref{Step_GBounded} using \eqref{Eq_HDiamondDirichletPoincare}, we have
  \begin{align*}
    \int_{B_r\setminus B_\epsilon} \abs{z}^{2-2n}\abs{\nabla_{H_\diamond}s}^2
    &\lesssim
      \int_{B_{2r}\setminus B_r} \abs{z}^{2-2n}\abs{\nabla_{H_\diamond} s}^2
      + r^2
      + \epsilon^{2-2n}\int_{B_\epsilon\setminus B_{\epsilon/2}} \abs{\nabla_{H_\diamond} s}^2 \\
    &\lesssim
      g(2r) - g(r)
      + r^2
      + g(\epsilon).
  \end{align*}
  By Lebesgue's monotone convergence theorem, the last term vanishes as $\epsilon$ tends to zero;
  hence, the asserted inequality follows with $\gamma = \frac{c}{c+1}$ and $A = c$.

  \begin{step}
    \label{Step_GGrowthBound}
    We have $g \leq cr^{2\alpha}$ for some $\alpha \in (0,1)$.
  \end{step}

  This follows from \autoref{Step_GBounded} and \autoref{Step_GDoublingBound} and as in \cite[Step 3 in the proof of Proposition C.2]{Jacob2016}.
\end{proof}

\begin{proof}[Proof of \autoref{Prop_LogarithmicMorreyBound}]
  We use the same notation as in the proof of \autoref{Prop_MorreyBound}.
  It still holds that $g \leq c$.
  However, the proof of the doubling estimate in \autoref{Step_GDoublingBound} uses that $\sF$ is simple and will not carry over.
  Instead, using integration by parts and Hölder's inequality we have
  \begin{align*}
    \int_{B_r\setminus B_\epsilon} \abs{z}^{2-2n}\abs{\nabla_{H_\diamond}s}^2
    &\lesssim
      \int_{B_{2r}\setminus B_{\epsilon/2}} \chi_r(1-\chi_{\epsilon/2}) G(1-\Delta\abs{s}^2) \\
    &\lesssim
      \int_{B_{2r}\setminus B_r} \abs{z}^{1-2n}\del_r \abs{s}^2
      + r^2
      + \epsilon^{1-2n}\int_{B_\epsilon\setminus B_{\epsilon/2}} \del_r\abs{s}^2 \\
    &\lesssim
      \(\int_{B_{2r}\setminus B_r} \abs{z}^{2-2n}\abs{\nabla_{H_\diamond}s}^2\)^{1/2}
      + r^2 \\
    &\quad\quad
      + \(\int_{B_\epsilon\setminus B_{\epsilon/2}} \abs{z}^{2-2n}\abs{\nabla_{H_\diamond}s}^2\)^{1/2}.
  \end{align*}
  By Lebesgue's monotone convergence theorem, the last term vanishes as $\epsilon$ tends to zero;
  hence,
  \begin{equation*}
    g(r) \lesssim (g(2r)-g(r))^{1/2} + r^2.
  \end{equation*}
  The asserted inequality now follows from \autoref{Prop_LogarithmicGrowthEstimate}. 
\end{proof}

\begin{prop}
  \label{Prop_LogarithmicGrowthEstimate}
  If $g \co [0,1] \to [0,\infty)$ is monotone increasing and satisfies
  \begin{equation*}
    g(r) \leq A (g(2r)-g(r))^{1/2} + Br^2,
  \end{equation*}
  then there are constants $c > 0$ and $r_0 \in (0,1]$ depending on $A$, $B$ and $g(1)$, such that
  \begin{equation*}
    g(r) \lesssim c(-\log r)^{-1}
  \end{equation*}
  for $r \in (0,r_0]$.
\end{prop}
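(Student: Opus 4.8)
The plan is to discretise dyadically, reduce the recursive inequality to a discrete Riccati-type comparison, and extract the $1/k$ decay of the sampled sequence by telescoping its reciprocals.

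First I would set $a_k := g(2^{-k})$ for $k \ge 0$. Since $g$ is monotone increasing, $(a_k)$ is non-negative and non-increasing, so $d_k := a_{k-1} - a_k \ge 0$ and $\sum_k d_k = a_0 - \lim_k a_k \le g(1) < \infty$; letting $r \to 0$ in the hypothesis gives $\lim_{r\to 0} g(r) \le A(L-L)^{1/2} = 0$, so $\lim_k a_k = 0$. Evaluating the hypothesis at $r = 2^{-k}$ (so $r^2 = 4^{-k}$) yields $a_k \le A d_k^{1/2} + B 4^{-k}$, and squaring via $(x+y)^2 \le 2x^2 + 2y^2$ produces the key inequality
\[
  a_k^2 \le 2A^2 d_k + 2B^2 16^{-k} \qquad (k \ge 1).
\]
By monotonicity it then suffices to prove $a_k \lesssim 1/k$: for $r \in (2^{-k-1}, 2^{-k}]$ one has $g(r) \le a_k$ and $k \asymp -\log r$, which delivers $g(r) \lesssim (-\log r)^{-1}$ on a suitable interval $(0,r_0]$.

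The naive move---summing the key inequality---gives only $\sum_k a_k^2 \lesssim A^2 g(1) + B^2 < \infty$, and combined with monotonicity ($k\, a_k^2 \le \sum_{j\le k} a_j^2$) this yields merely $a_k \lesssim k^{-1/2}$, i.e. the weaker rate $(-\log r)^{-1/2}$. To reach the sharp rate I would instead telescope the reciprocals $u_k := 1/a_k$ (assuming $a_k > 0$, else the sequence vanishes from some index on and the claim is trivial). As $(a_k)$ decreases, $(u_k)$ increases and $u_k - u_{k-1} = d_k/(a_k a_{k-1})$, so the Riccati bound $d_k \gtrsim a_k^2$ should force a fixed lower bound on the increments, hence $u_k \gtrsim k$. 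Concretely I would split into cases. If $a_{k-1} \ge 2a_k$ (large drop), then $u_k \ge 2u_{k-1}$ outright. Otherwise $a_{k-1} < 2a_k$; if moreover $a_k^2 \ge 4B^2 16^{-k}$, the key inequality absorbs the geometric term to give $d_k \ge a_k^2/(4A^2)$, whence $u_k - u_{k-1} \ge d_k/(2a_k^2) \ge 1/(8A^2) =: c_0$. In the remaining case $a_k < 2B\,4^{-k}$, so $u_k > 4^k/(2B)$ is already exponentially large.

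Finally I would fix $k_1$ so that $u_{k_1} \ge c_0$ and $4^k/(2B) \ge c_0 k$ for all $k \ge k_1$; the crude $k^{-1/2}$ bound makes the first condition quantitative (it holds once $k \gtrsim (A^2 g(1)+B^2)/A^4$), and the second holds beyond a threshold depending only on $A,B$, so $k_1$ depends only on $A,B,g(1)$. A one-line induction then gives $u_k \ge c_0(k-k_1)$ for $k \ge k_1$: in the first two cases $u_k \ge u_{k-1}+c_0$ (using $u_{k-1} \ge c_0$ in the large-drop case), and in the last case $u_k > 4^k/(2B) \ge c_0 k$. Thus $a_k \le 1/(c_0(k-k_1)) \lesssim 1/k$ for $k \ge 2k_1$, and translating back through monotonicity completes the proof with $r_0 = 2^{-2k_1}$. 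The main obstacle is exactly this last upgrade: the summation estimate loses a power, so everything hinges on exploiting monotonicity via the reciprocal telescoping and on checking that the per-step increment of $u_k$ is uniformly bounded below despite the large-drop and geometric-tail cases; verifying that $k_1$, and hence $c$ and $r_0$, can be controlled solely by $A$, $B$, and $g(1)$ is the remaining bookkeeping.
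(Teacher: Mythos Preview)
Your proof is correct and rests on the same core idea as the paper's: discretise dyadically and exploit the Riccati-type inequality $a_k^2 \lesssim a_{k-1}-a_k$ to deduce $a_k \lesssim 1/k$. The paper's packaging is a bit more streamlined: instead of carrying the $Br^2$ term and splitting into your three cases, it sets $h(r) := g(r) + (B/A)r^2$, so the hypothesis becomes the clean inequality $h(r)^2 \leq 2A\bigl(h(2r)-h(r)\bigr)$, equivalently $h(r) \leq h(2r)/(1+\epsilon h(r))$ with $\epsilon = 1/(2A)$; iterating this and using monotonicity yields $h(2^{-k}) \leq 2/(\epsilon k)$ directly. Your reciprocal telescoping $u_k - u_{k-1} \geq c_0$ is exactly this mechanism in different clothing, but the absorption of $Br^2$ into $h$ removes the need for the large-drop and geometric-tail cases and for the preliminary $k^{-1/2}$ bound you use to locate $k_1$.
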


\begin{proof}
  For $r \in (0,r_0]$ the function $h(r) := g(r)+B/Ar^2$ satisfies
  \begin{equation*}
    h(r)^2 \leq 2A(h(2r)-h(r));
  \end{equation*}
  hence,
  \begin{equation*}
    h(r) \leq \frac{1}{1+\epsilon h(r)} h(2r)
  \end{equation*}
  with $\epsilon = 1/2A$.
  We can assume that $\epsilon h(1) \leq 1/2$.
  Using $(1+x)^{-1} \leq 1 - x$ for $x \geq 0$, and $(1-x)^k \leq 1 - \frac{k}{2}x$ for $x \in [0,1/2]$, we derive
  \begin{equation*}
    0
    \leq
    h(2^{-k})
    \leq
    \(1-\frac{k\epsilon}{2} h(2^{-k})\)h(1);
  \end{equation*}
  hence,
  \begin{equation*}
    h(2^{-k})
    \leq
      \frac{2}{\epsilon k}.
    \qedhere
  \end{equation*}
\end{proof}


\section{Proof of \autoref{Thm_HYMMetricTangentCones}}

For $r > 0$, define $m_r \co \C^n \to \C^n$ by $m_r(z) := rz$.
Set
\begin{equation*}
  s_r := m_r^*(s|_{B_{4r}\setminus B_{r/2}}) \in C^\infty(B_4\setminus B_{1/2},i\su(E,H_*))
  \qandq
  H_{\diamond,r} := m_r^*H_\diamond.
\end{equation*}
The metric $H_{\diamond,r} e^{s_r}$ is $\P$HYM with respect to $\omega_r := r^{-2}m_r^*\omega $ and $\Abs{\rF_{H_\diamond,r}}_{C^k(B_4\setminus B_{1/2})} \leq c_k$.

\autoref{Prop_APrioriC0Bound}, \eqref{Eq_HDiamondCurvatureBounds} and interior estimates for $\P$HYM metrics \cite[Theorem C.1]{Jacob2016} imply that
\begin{equation*}
  \Abs{s_r}_{C^k(B_3\setminus B_{3/4})} \leq c_k.
\end{equation*}
By \autoref{Prop_LogarithmicMorreyBound}, we have
\begin{equation*}
  \Abs{\nabla_{H_{\diamond,r}} s_r}_{L^2(B_3\setminus B_{3/4})}
  \leq
  c_k (-\log r)^{-1/2}.
\end{equation*}
Schematically, $\rK_{H_{\diamond,r} e^{s_r}} = 0$ can be written as
\begin{equation*}
  \nabla_{H_{\diamond,r}}^*\nabla_{H_{\diamond,r}} s_r
  + B(\nabla_{H_{\diamond,r}} s\otimes \nabla_{H_{\diamond,r}} s_r)
  = C(\rK_{H_{\diamond,r}}),
\end{equation*}
where $B$ and $C$ are linear with coefficients depending on $s$, but not on its derivatives; see, e.g., \cite[Proposition A.1]{Jacob2016}.
Since $\Abs{\rK_{H_{\diamond,r}}}_{C^k(B_3\setminus B_{3/4})} \leq c_kr^2$, as in \cite[Step 3 in the proof of Proposition 5.1]{Jacob2016}, standard interior estimates imply that
\begin{equation*}
  \Abs{\nabla_{H_{\diamond,r}}^k s_r}_{L^\infty(B_2\setminus B_1)}
  \leq
  c_k (-\log r)^{-1/2}
\end{equation*}
and, hence, the asserted inequalities, for each $k \geq 1$.
(The asserted inequality for $k = 0$ has already be proven in \autoref{Prop_APrioriC0Bound}.)

If \eqref{Eq_HDiamondDirichletPoincare} holds, then by \autoref{Prop_MorreyBound} we have
\begin{equation*}
  \Abs{\nabla_{H_{\diamond,r}} s_r}_{L^2(B_4\setminus B_{1/2})}
  \lesssim
    r^{\alpha}
  \qandq
  \Abs{s_r}_{L^2(B_4\setminus B_{1/2})}
  \lesssim
    r^{\alpha};
\end{equation*}
hence, using standard interior estimates
\begin{equation*}
  \Abs{\nabla_{H_{\diamond,r}}^k s_r}_{L^2(B_2\setminus B_{1})}
  \lesssim
    r^{\alpha}
    \qforeach k \geq 0.
\end{equation*}
This concludes the proof of \autoref{Thm_HYMMetricTangentCones}.
\qed


\section{Proof of \autoref{Prop_RadiallyInvariantHYMConnections}}
\label{Sec_RadiallyInvariantHYMConnections}

We will make use of the following general fact about connections over manifolds with free $S^1$--actions.

\begin{prop}
  \label{Prop_ConnectionsOnS1Spaces}
  Let $M$ be a manifold with a free $S^1$--action.
  Denote the associated Killing field by $\xi \in \Vect(M)$ and let $q\co M \to M/S^1$ be the canonical projection.
  Suppose $\theta \in \Omega^1(M)$ is such that $\theta(\xi) = 1$ and $\sL_\xi\theta = 0$.
  Let $A$ be a unitary connection on a Hermitian vector bundle $(E,H)$ over $M$.
  If $i(\xi)\rF_A = 0$, then there is a $k \in \N$ and, for each $j \in \set{1,\ldots,k}$, a Hermitian vector bundles $(F_j,K_j)$ over $M/S^1$ such that
  \begin{equation*}
    E = \bigoplus_{j=1}^k E_j \qandq
    H = \bigoplus_{j=1}^k H_j
  \end{equation*}
  with $E_j := q^*F_j$ and $H_j := q^*K_j$;
  moreover, the bundles $E_j$ are parallel and, for each $j \in \set{1,\ldots,k}$, there are a unitary connection $B_j$ on $F_j$ and $\mu_j \in \R$ such that
  \begin{equation*}
    A = \bigoplus_{j=1}^k q^*B_j + i\mu_j\,\id_{E_j} \cdot \theta.
  \end{equation*}
\end{prop}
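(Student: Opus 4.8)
The plan is to read off the decomposition from the \emph{vertical holonomy} of $A$ along the $S^1$--orbits. Write $\set{g_t}$ for the flow of $\xi$ and let $T>0$ be its period, so that $g_T = \id_M$ and $\int_0^T \theta(\xi)\,\rd t = T$. For each $x\in M$ let $\tau_x\co E_x\to E_x$ be the holonomy obtained by parallel transporting with $A$ once around the orbit $t\mapsto g_t(x)$, $t\in[0,T]$. Since $A$ is unitary, each $\tau_x$ is a unitary automorphism of $(E_x,H_x)$, and $\tau$ is a smooth section of $\End(E)$. The key step is to prove that $\tau$ is $\nabla$--parallel, where $\nabla$ is the covariant derivative of $A$. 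This is where the hypothesis $i(\xi)\rF_A = 0$ enters: by the variation-of-holonomy formula, the derivative of $\tau$ in any direction $Y$ is governed by $\int_0^T$ of (a conjugate of) $\rF_A(Y,\xi)$ along the orbit, and this integrand vanishes identically. Concretely, the identity $[\nabla_\xi,\nabla_Y] = \nabla_{[\xi,Y]} + \rF_A(\xi,Y) = \nabla_{[\xi,Y]}$ shows that parallel transport along orbits intertwines parallel transport in the remaining directions, forcing $\nabla\tau = 0$.

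Granting that $\tau$ is parallel and unitary, its eigenvalues are locally constant, hence constant on $M$ (which we may assume connected, as in the application $M = S^{2n-1}$). The eigenbundle decomposition $E = \bigoplus_{j=1}^k E_j$ is then $H$--orthogonal and $\nabla$--parallel; in particular the $E_j$ are parallel subbundles and $H = \bigoplus_{j=1}^k H_j$ with $H_j := H|_{E_j}$. Write the eigenvalue of $\tau$ on $E_j$ as $e^{-i\mu_j T}$ for some $\mu_j\in\R$ (the choice of logarithm, ambiguous modulo $2\pi/T$, is harmless). On each summand set $A_j := A|_{E_j} - i\mu_j\,\id_{E_j}\cdot\theta$. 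Because $i(\xi)\rd\theta = \sL_\xi\theta - \rd(i(\xi)\theta) = 0$, we retain $i(\xi)\rF_{A_j} = 0$; and since the extra term $-i\mu_j\theta$ multiplies the vertical holonomy by $e^{i\mu_j T}$, the constant $\mu_j$ is chosen precisely so that the vertical holonomy of $A_j$ around every orbit is trivial.

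It remains to descend each $(E_j,H_j,A_j)$ to $M/S^1$. With trivial vertical holonomy, parallel transport along orbits furnishes a genuine lift of the $S^1$--action to $E_j$: the $\R$--action integrating $\nabla_\xi$ closes up after time $T$ exactly because the holonomy is trivial. This lift preserves $H_j$ (parallel transport is unitary) and $A_j$ (the curvature is horizontal and $\theta,H$ are invariant). Freeness of the $S^1$--action on $M$ then yields, by the standard descent of invariant Hermitian bundles-with-connection along a free quotient, a Hermitian bundle $(F_j,K_j)$ over $M/S^1$ and a unitary connection $B_j$ with $E_j = q^*F_j$, $H_j = q^*K_j$, and $A_j = q^*B_j$. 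Reassembling the summands gives $A = \bigoplus_{j=1}^k q^*B_j + i\mu_j\,\id_{E_j}\cdot\theta$, which is the assertion.

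I expect the genuine obstacle to be the first step, namely making the holonomy-variation argument rigorous so that $\nabla\tau = 0$ follows cleanly from $i(\xi)\rF_A = 0$ rather than through an \emph{ad hoc} coordinate computation. A convenient route is to trivialize $E$ along each orbit using the flow of $\nabla_\xi$ and to verify directly that the comparison with neighbouring orbits is curvature-free in the $\xi$--direction. By contrast, the descent in the final paragraph is routine once triviality of the vertical holonomy has been arranged.
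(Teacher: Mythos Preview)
Your proof is correct and is essentially the paper's argument in different language: the paper lifts $\xi$ horizontally to the principal bundle $\U(E)$, observes that the resulting $\R$--action preserves $A$ (since $\sL_{\tilde\xi}A = i(\tilde\xi)\rF_A = 0$), and takes your holonomy section $\tau$ to be the gauge transformation $\bg_A$ given by the action of the period---a gauge transformation fixing $A$ is automatically parallel, which is your ``$\nabla\tau = 0$'' step. The eigenbundle decomposition of $\bg_A$, the subtraction of $i\mu_j\,\id_{E_j}\cdot\theta$ to trivialise the vertical holonomy, and the descent to $M/S^1$ are then carried out exactly as you describe.
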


\begin{proof}
  Denote by $\tilde\xi \in \Vect(\U(E))$ the $A$--horizontal lift of $\xi$.
  This vector field integrates to an $\R$--action of $\U(E)$.
  Thinking of $A$ as an $\fu(r)$--valued $1$--form on $\U(E)$ and $\rF_A$ as an $\fu(r)$--valued $2$--form on $\U(E)$, we have
  \begin{equation*}
    \sL_{\tilde \xi} A = i(\tilde\xi) \rF_A = 0;
  \end{equation*}
  hence, $A$ is invariant with respect to the $\R$--action on $\U(E)$.

  The obstruction to the $\R$--action on $U(E)$ inducing an $S^1$--action is the action of $1 \in \R$ and corresponds to a gauge transformation $\bg_A \in \sG(\U(E))$ fixing $A$.
  If this obstruction vanishes, i.e., $\bg_A = \id_{\U(E)}$, then $E \iso q^*F$ with $F = E/S^1$ and there is a connection $A_0$ on $F$ such that $A = q^*A_0$.

  If the obstruction does not vanish, we can decompose $E$ into pairwise orthogonal parallel subbundles $E_j$ such that $\bg_A$ acts on $E_j$ as multiplication with $e^{i \mu_j}$ for some $\mu_j \in \R$.
  Set $\tilde A := A - \bigoplus_{j=1}^k i\mu_j \, \id_{E_j} \cdot \theta$.
  This connection also satisfies $i(\xi) \rF_{\tilde A} = 0 \in \Omega^1(M,\fg_E)$ and the subbundles $E_j$ are also parallel with respect to $E_j$.
  Since $\bg_{\tilde A} = \id_E$, the assertion follows.
\end{proof}

In the situation of \autoref{Prop_RadiallyInvariantHYMConnections}, with $\xi \in S^{2n-1}$ denoting the Killing field for the $S^1$--action we have $i(\xi)\rF_{A_0} = 0$; c.f., \citet[discussion after Conjecture 2]{Tian2000}.
Therefore, we can write
\begin{equation*}
  A_* = \bigoplus_{j=1}^k \sigma^*B_j + i\mu_j\,\id_{E_j} \cdot \pi^*\theta.
\end{equation*}
Since $\rd\theta = 2\pi \rho^*\omega_{FS}$, we have
\begin{equation*}
  \rF_{A_*} = \bigoplus_{j=1}^k \sigma^*\rF_{B_j} + 2\pi i\mu_j\,\id_{E_j} \cdot \sigma^*\omega_{FS}.
\end{equation*}
Using \eqref{Eq_Omega0Expansion}, $A_*$ being HYM with respect to $\omega_0$ can be seen to be equivalent to
\begin{equation*}
  \rF_{B_j}^{0,2} = 0 \qandq
  i\Lambda \rF_{B_j} = (2n-2)\pi \mu_j \cdot \id_{E_j}.
\end{equation*}
The isomorphism $\sE = (E,\delbar_{A_*}) \iso \bigoplus_{j=1}^k \rho^*\sF_j$ with $\sF_j = (F_j,\delbar_{B_j})$ is given by $g^{-1}$ with $g := \bigoplus_{j=1}^k r^{\mu_j}$.
\qed


\paragraph{Acknowledgements.}

HSE and TW were partially supported by São Paulo State Research Council
(FAPESP) grant 2015/50368-0 and the MIT--Brazil Lemann Seed Fund for Collaborative Projects.
HSE is  also funded by FAPESP grant 2014/24727-0 and Brazilian National Research Council (CNPq) grant PQ2 - 312390/2014-9. 

\printreferences

\end{document}
